\begin{document}

\title{Quantifying discontinuity}

\author[Adams]{Henry Adams}
\address[HA]{Department of Mathematics, University of Florida, Gainesville, FL 32611, USA}
\email{henry.adams@ufl.edu}

\author[Frick]{Florian Frick}
\address[FF]{Dept.\ Math.\ Sciences, Carnegie Mellon University, Pittsburgh, PA 15213, USA}
\email{frick@cmu.edu} 

\author[Harrison]{Michael Harrison}
\address[MH]{Microsoft Research, Seattle, USA}
\email{mah5044@gmail.com}



\author[Sadovek]{Nikola Sadovek}
\address[NS]{Max Planck Institute of Molecular Cell Biology and Genetics, Dresden, Germany \newline
Center for Systems Biology Dresden, Dresden, Germany \newline
Faculty of Mathematics, Technische Universit\"at Dresden, Dresden, Germany\newline
Institute of Mathematics, Freie Universit\"at Berlin, Germany}
\email{sadovek@mpi-cbg.de}

\author[Superdock]{Matt Superdock}
\address[MS]{Department of Computer Science, Rhodes College, Memphis, TN 38112, USA}
\email{superdockm@rhodes.edu}

\thanks{\hspace{-4mm}\textit{2020 Mathematics Subject Classification.} 51F30, 57N35, 52A35. \\
HA was supported by the Simons Foundation Travel Support for Mathematicians.
FF was supported by NSF CAREER Grant DMS 2042428.
NS was funded in part by the Deutsche Forschungsgemeinschaft (DFG, German Research Foundation) under Germany's Excellence Strategy--The Berlin Mathematics Research Center MATH+ (EXC-2046/1, project ID 390685689, BMS Stipend).}

\begin{abstract}
\small 
{Given a compact space $X$ that does not admit an embedding (an injective continuous function) into $\R^d$, we study the ``degree'' of discontinuity that any injective function $X \to \R^d$ must have.
To this end, we define a scale invariant modulus of discontinuity and obtain general lower bounds, thus obtaining quantified nonembeddability results of Haefliger--Weber type. Moreover, we establish analogous lower bounds for simplicial complexes that do not admit an almost $r$-embedding in $\R^d$, thus obtaining a quantified version of the topological Tverberg theorem.}
\end{abstract}

\maketitle

%
%



\section{Introduction}
\label{sec:intro}

The ``embedding problem'' is a classical problem of topology, which for a given space~$X$ and positive integer~$d$ asks whether $X$ embeds into~$\R^d$.
Seminal results include constructions of Flores~\cite{flores1933n} and of van Kampen \cite{van1933komplexe} in the 1930s of $d$-dimensional complexes that do not embed into~$\R^{2d}$ (on the negative side), and Whitney's embedding theorem~\cite{whitney44} from the 1940s which asserts that any smooth $d$-manifold embeds into~$\R^{2d}$ (on the positive side).
If $X$ indeed embeds into~$\R^d$ we may ask how rich the space of embeddings $X \hookrightarrow \R^d$ is---for example, we can investigate whether all such embeddings are the same up to isotopy (the ``unknotting problem'').
If $X$ is endowed with a metric, quantitative bounds for embeddings, such as for their Lipschitz constants, have been studied \cite{aharoni1974every, kalton2008best}.

Here we complement this quantitative investigation on the negative side, that is, for nonembedd\-ability results.
We augment classical results asserting that $X$ does not embed into~$\R^d$ to quantified bounds for the discontinuity of injective functions $X \to \R^d$.
For compact $X$ a nonembeddability result simply asserts that there is no continuous injection of $X$ into~$\R^d$.
We more generally establish lower bounds for a measure of discontinuity of injective functions $X \to \R^d$.
This measure of discontinuity needs to be scale-invariant, since for a given injective function $f\colon X\to\R^d$, a suitable rescaled function has its image contained in some small $\varepsilon$-ball.
In particular, the modulus of discontinuity (also called oscillation) of injective functions $X\to \R^d$ is arbitrarily small.

Instead, a suitable measure of discontinuity will only make reference to angles and not to distances in~$\R^d$.
Let $f\colon X\to \R^d$ be continuous.
For $u,v \in \R^d$ denote the line segment connecting $u$ to~$v$ by~$\overline{uv}$.
For any given angle $\alpha >0$ and any point $(x,y) \in X\times X$ with $x \ne y$, there is a neighborhood $U$ of~$(x,y)$ such that for all $(x',y') \in U$ the line segments $\overline{f(x)f(y)}$ and $\overline{f(x')f(y')}$ make an angle of at most~$\alpha$.
For discontinuous~$f$, even if $x$ is close to~$x'$ and $y$ is close to~$y'$, the corresponding line segments connecting their images in~$\R^d$ could make a large angle.
We will refer to the infimal~$\alpha$ such that for any point $(x,y) \in X\times X$, there is a neighborhood $U$ of~$(x,y)$ such that for all $(x',y') \in U$ the line segments $\overline{f(x)f(y)}$ and $\overline{f(x')f(y')}$ make an angle of at most~$\alpha$ as the \emph{scale-invariant modulus of discontinuity}~$\alpha(f)$.
In particular, continuous functions $f$ necessarily have $\alpha(f) = 0$.
Moreover, we show in Section \ref{sec:suitability-alpha} that under mild assumptions $\alpha$ completely captures continuity in the sense that $\alpha(f) = 0$ happens if and only if $f$ is continuous.
Therefore, bounding $\alpha(f)$ away from zero indeed represents a scale-invariant quantification of discontinuity of~$f$.

Here we list some consequences of our results for classical nonembeddability theorems, starting with the quantified version of the nonembeddability of $\RP^d$ into $\R^{2d-1}$ when $d$ is a power of two~\cite{milnor1974characteristic}.
We will denote by $r_n$ the diameter of the vertices of a regular $(n+1)$-simplex inscribed in $S^n$ (see Section~\ref{sec:background}).

\begin{theorem} \label{thm:RP}
    Any injective function $f \colon \RP^{2^k} \to \R^{2^{k+1}-1}$ satisfies $\alpha(f) \ge r_{2^{k+1}-2} = \arccos{\left(\frac{-1}{2^{k+1}-1}\right)}$.
\end{theorem}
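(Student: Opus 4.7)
The plan is to argue by contradiction, reducing to the classical nonembeddability of $\RP^{2^k}$ in $\R^{2^{k+1}-1}$ (the Milnor-type result cited in the introduction) via a secant-map obstruction. Writing $d=2^{k+1}-1$, the classical ingredient I would invoke at the end is the nonexistence of a continuous $\mathbb{Z}/2$-equivariant map from the configuration space $F(\RP^{2^k},2)=\RP^{2^k}\times\RP^{2^k}\setminus\Delta$ (with the swap action) to $S^{d-1}$ (with the antipodal action). Assuming for contradiction that $\alpha(f)<r_{d-1}=\arccos(-1/d)$, I would smooth the a priori discontinuous secant map $g(x,y)=(f(x)-f(y))/\|f(x)-f(y)\|$, which is well defined by injectivity of $f$ and satisfies $g(y,x)=-g(x,y)$, into a genuinely continuous $\mathbb{Z}/2$-equivariant map $\tilde g$, producing the desired contradiction.

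The core geometric lemma is: any $A\subset S^{d-1}$ of angular diameter strictly less than $r_{d-1}$ satisfies $0\notin\conv(A)$, so that the normalization of any finite convex combination of points from $A$ (a ``spherical barycenter'') is well defined. I would prove this directly: for unit vectors $v_0,\dots,v_m$ with pairwise $\langle v_i,v_j\rangle>-1/d$ and weights $t_i\ge 0$ summing to $1$,
\[
\Bigl\|\sum_{i} t_i v_i\Bigr\|^2 = \sum_i t_i^2 + \sum_{i\ne j} t_i t_j\langle v_i,v_j\rangle > \Bigl(1+\tfrac{1}{d}\Bigr)\sum_i t_i^2 - \tfrac{1}{d},
\]
which is positive by Carath\'eodory ($m\le d$) and Cauchy--Schwarz ($\sum t_i^2\ge 1/(m+1)\ge 1/(d+1)$), the borderline case $\sum t_i^2=1/(d+1)$ corresponding exactly to the regular $d$-simplex inscribed in $S^{d-1}$, which is excluded by the strict angle hypothesis. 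The assumption $\alpha(f)<r_{d-1}$ ensures that every $(x,y)\in F(\RP^{2^k},2)$ admits a neighborhood $U_{(x,y)}$ whose image $g(U_{(x,y)})$ has angular diameter bounded by some fixed $\alpha<r_{d-1}$, so the lemma applies locally along the secant map.

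To build $\tilde g$, I would take a $\mathbb{Z}/2$-invariant locally finite \emph{star-refinement} $\{V_i\}$ of $\{U_{(x,y)}\}$ (available because $F(\RP^{2^k},2)$ is paracompact), choose representatives $v_i\in g(V_i)$ equivariantly by picking one $v_i$ per $\mathbb{Z}/2$-orbit of indices and setting $v_{\sigma(i)}=-v_i$, take a $\mathbb{Z}/2$-equivariant partition of unity $\{\phi_i\}$ subordinate to $\{V_i\}$, and define $\tilde g(x,y)$ as the normalization of $\sum_i\phi_i(x,y)v_i$. The main technical obstacle is ensuring this normalization is always possible; the star-refinement property guarantees that at any $(x,y)$, all active indices (those with $\phi_i(x,y)>0$) have their $V_i$'s sitting inside a common $U_{(x,y')}$, so the corresponding $v_i$'s all lie in a single subset of $S^{d-1}$ of angular diameter $<r_{d-1}$, and the lemma delivers nonvanishing of the sum. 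Continuity then follows from standard partition-of-unity reasoning and equivariance from the choices. The existence of this continuous equivariant $\tilde g$ contradicts the classical obstruction, forcing $\alpha(f)\ge r_{2^{k+1}-2}$.
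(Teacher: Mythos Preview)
Your argument is correct and follows the same overall strategy as the paper: assuming $\alpha(f)<r_{d-1}$, manufacture a continuous $\Z/2$-equivariant map $\Conf_2(\RP^{2^k})\to S^{d-1}$ and contradict the classical obstruction. The only difference is packaging. The paper (Theorem~\ref{thm:quant-general}) routes the smoothing through Vietoris--Rips metric thickenings: the discontinuous $\Phi_f$ induces a \emph{continuous} $\Z/2$-map $\Conf_2(X)\hookrightarrow\vrm{\Conf_2(X)}{\rho}\to\vrm{S^{d-1}}{\alpha(f)+\varepsilon}$, and for $\alpha(f)+\varepsilon<r_{d-1}$ the equivariant homotopy equivalence $\vrm{S^{d-1}}{r}\simeq S^{d-1}$ from~\cite{AAF} supplies the final retraction. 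Your partition-of-unity construction and convex-hull lemma are exactly an unpacking of that machinery: the retraction $\vrm{S^{d-1}}{r}\to S^{d-1}$ in~\cite{AAF} is literally $\mu\mapsto\barycenter(\mu)/\|\barycenter(\mu)\|$, well-defined precisely by the inequality you prove, and the first map is built (in the cited Lemma~7.4 of~\cite{GH-BU-VR}) via a subordinate partition of unity just as you do. So the two proofs coincide in content; yours is more self-contained and elementary, the paper's more modular and reusable for the later Tverberg-type results.
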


This is a consequence of the more general Theorem \ref{thm:quant-general} and Corollary \ref{cor:quant-hw} for manifolds and simplicial complexes.
This approach relies on the seminal work of Haefliger~\cite{Haefliger} and Weber~\cite{Weber} relating the embedding problem to the existence of $\Z/2$-equivariant maps $\Conf_2(X) \to S^{d-1}$  (see Theorem \ref{thm:Haefliger}).
For spaces with well-enough understood configuration space one can go beyond the Haefliger--Weber metastable range to obtain an improved lower bound:

\begin{theorem} \label{thm:StoR}
    For $k \ge d-1$, any injective function $f \colon S^{k} \to \R^d$ satisfies $\alpha(f) \ge c_{d - 1, k}$.
\end{theorem}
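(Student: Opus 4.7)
The plan is to reduce to a quantified Borsuk--Ulam problem by exploiting the canonical $\mathbb{Z}/2$-equivariant embedding of $S^k$ into $\Conf_2(S^k)$ given by the antipodal diagonal $\{(x,-x) : x\in S^k\}$, on which the coordinate-swap action restricts to the antipodal action on $S^k$. This should yield a stronger bound than the Haefliger--Weber approach used for Theorem \ref{thm:RP}, since we only use antipodal pairs rather than all of $\Conf_2(S^k)$. Concretely, I would set
\[
\psi_f(x) := \frac{f(x)-f(-x)}{\|f(x)-f(-x)\|},
\]
which is well-defined on $S^k$ by injectivity of $f$ (since $x\ne-x$ on $S^k$) and manifestly satisfies $\psi_f(-x)=-\psi_f(x)$.

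Next, I would translate $\alpha(f)$ into a bound on the local angular oscillation of $\psi_f$. Fix $x\in S^k$ and $\varepsilon>0$. Applying the definition of $\alpha(f)$ at the pair $(x,-x)$ produces a neighborhood $U$ of $(x,-x)$ in $S^k\times S^k$ such that every secant $\overline{f(x')f(y')}$ with $(x',y')\in U$ makes angle at most $\alpha(f)+\varepsilon$ with $\overline{f(x)f(-x)}$. Because $x'\mapsto(x',-x')$ is continuous, there is a neighborhood $V$ of $x$ in $S^k$ on which $\angle\bigl(\psi_f(x'),\psi_f(x)\bigr)\le\alpha(f)+\varepsilon$. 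Thus the local angular oscillation of $\psi_f$ is at most $\alpha(f)$ at every point.

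The final step is to invoke a quantified Borsuk--Ulam bound, presumably the very statement defining the constants $c_{n,k}$: for $k\ge d-1$, every $\mathbb{Z}/2$-equivariant map $\psi\colon S^k\to S^{d-1}$ whose local angular oscillation is at most $\alpha$ at every point must have $\alpha\ge c_{d-1,k}$. Combining this with the previous step and letting $\varepsilon\to 0$ gives $\alpha(f)\ge c_{d-1,k}$.

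The main obstacle is this quantitative Borsuk--Ulam step. For $k=d-1$ the inequality is vacuous (one expects $c_{d-1,d-1}=0$, consistent with the standard embedding $S^{d-1}\hookrightarrow\R^d$ realized by the identity equivariant map $S^{d-1}\to S^{d-1}$). For $k>d-1$ no continuous $\mathbb{Z}/2$-equivariant map $S^k\to S^{d-1}$ exists, so $c_{d-1,k}>0$ measures the smallest local angular jump required to simulate one. I expect the proof to proceed by a selection / partition-of-unity argument: an oscillation strictly below $c_{d-1,k}$ would let one average the values of $\psi$ over small equivariant neighborhoods, producing a genuine continuous $\mathbb{Z}/2$-equivariant map and contradicting the classical Borsuk--Ulam theorem. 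Pinning down the sharp value of $c_{d-1,k}$ — extracted from the combinatorics of optimal spherical coverings — rather than merely showing positivity, is where the real technical effort lies.
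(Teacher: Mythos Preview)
Your approach is correct and essentially identical to the paper's: your map $\psi_f$ is precisely $\Phi_f\circ g$ for the $\Z/2$-map $g\colon S^k\to\Conf_2(S^k)$, $x\mapsto(x,-x)$, and the paper's proof (Theorem~\ref{thm:alpha-coind} and its corollary) reduces in exactly this way to the inequality $\delta(\psi_f)\le\delta(\Phi_f)=\alpha(f)$ together with the quantified Borsuk--Ulam bound $\delta(\psi_f)\ge c_{d-1,k}$. The ``main obstacle'' you identify is not an obstacle here---it is already recorded as Theorem~\ref{thm:oddmod} in the background (quoted from~\cite{GH-BU-VR}), so no further work on the constants $c_{d-1,k}$ is required for this theorem.
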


This is a special case of a more general Theorem~\ref{thm:alpha-coind}; the constants~$c_{d - 1, k}$ are defined in Section~\ref{sec:background} and record certain homotopy information of the Vietoris--Rips complexes of spheres.

One of the earliest nonembeddability results asserts the non-planarity of the complete graph on five vertices~$K_5$.
Our results more generally show that for any injective function $f\colon K_5\to \R^2$ there are tuples of arbitrarily close points $(x,y)$ and $(x',y')$ in~$K_5$ such that the line segments $\overline{f(x)f(y)}$ and $\overline{f(x')f(y')}$ make an angle of at least~$\arccos(-1/2) = 2\pi/3$.
Namely, we obtain the following quantified version of the nonembeddability result of van Kampen and Flores:

\begin{theorem} \label{thm:vKF-intro}
    Every injective function $\sk_d(\Delta_{2d+2}) \to \R^{2d}$ satisfies $\alpha(f) \ge r_{2d-1} = \arccos{\left(\frac{-1}{2d}\right)}$.
\end{theorem}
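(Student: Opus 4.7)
The plan is to deduce Theorem~\ref{thm:vKF-intro} from the general quantified Haefliger--Weber bound of Corollary~\ref{cor:quant-hw}, combined with the classical van Kampen--Flores obstruction. Write $K := \sk_d(\Delta_{2d+2})$ and argue by contradiction, supposing there is an injection $f\colon K \to \R^{2d}$ with $\alpha(f) < r_{2d-1}$.

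The first ingredient is the secant direction map $\sigma\colon \Conf_2(K) \to S^{2d-1}$, defined by $(x,y)\mapsto (f(y)-f(x))/\|f(y)-f(x)\|$, which is $\Z/2$-equivariant for the swap action on $\Conf_2(K)$ and the antipodal action on $S^{2d-1}$. Injectivity of $f$ ensures $\sigma$ is defined everywhere on $\Conf_2(K)=K\times K\setminus\Delta$. Although $\sigma$ need not be continuous, the bound $\alpha(f) < r_{2d-1}$ asserts that the local angular oscillation of $\sigma$ is uniformly controlled by a value strictly below the critical threshold $r_{2d-1} = \arccos(-1/(2d))$, the pairwise angular distance of the vertices of a regular $(2d)$-simplex inscribed in $S^{2d-1}$. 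The quantified Haefliger--Weber statement of Corollary~\ref{cor:quant-hw} upgrades this local oscillation bound to the existence of a genuinely continuous $\Z/2$-equivariant map $\widetilde\sigma\colon \Conf_2(K)\to S^{2d-1}$, obtained by patching together equivariant spherical averages of $\sigma$ on suitable local charts.

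The second ingredient is the classical topological obstruction underlying van Kampen--Flores: no continuous $\Z/2$-equivariant map $\Conf_2(\sk_d(\Delta_{2d+2})) \to S^{2d-1}$ exists, as the $\Z/2$-coindex of the relevant deleted product (equivalently, the deleted join) of $K$ is at least $2d$ via the standard cross-polytope/Borsuk--Ulam computation. This contradicts the existence of $\widetilde\sigma$, forcing $\alpha(f) \ge r_{2d-1}$.

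The principal technical obstacle is concealed in Corollary~\ref{cor:quant-hw} itself: one must show that the local angular control afforded by $\alpha(f) < r_{2d-1}$ is enough to produce a continuous equivariant lift of $\sigma$, with $r_{2d-1}$ being the sharp threshold (rather than the weaker $r_{2d-1}/2$ suggested by a naive spherical-barycenter argument, where averaging points within a cap of \emph{radius} $r_{2d-1}/2$ keeps one safely inside an open hemisphere). Once this quantified framework is in place, Theorem~\ref{thm:vKF-intro} reduces to the classical coindex computation for the van Kampen--Flores space, which is standard.
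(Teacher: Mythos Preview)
Your overall strategy is sound and matches the paper's, but you are invoking the wrong result, and the wrong one does not do what you claim. Corollary~\ref{cor:quant-hw} is a black-box statement: its hypothesis is the nonexistence of an embedding in the metastable range, and its conclusion is already the inequality $\alpha(f)\ge r_{d-1}$. It does not ``upgrade a local oscillation bound to a continuous equivariant map.'' Worse, the metastable hypothesis $d>\tfrac32(n+1)$ for $K=\sk_d(\Delta_{2d+2})$ (so $n=d$) and target $\R^{2d}$ reads $2d>\tfrac32(d+1)$, i.e.\ $d\ge 4$; invoking Corollary~\ref{cor:quant-hw} leaves $d=1,2,3$ uncovered.

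What you want is Theorem~\ref{thm:quant-general}: its proof (via the Vietoris--Rips metric thickening and the equivalence $\vrm{S^{2d-1}}{\rho}\simeq S^{2d-1}$ for $\rho<r_{2d-1}$) is exactly the mechanism that turns $\alpha(f)<r_{2d-1}$ into a continuous $\Z/2$-map $\Conf_2(K)\to S^{2d-1}$, with no dimension restriction. Combined with the van Kampen--Flores obstruction you cite (no $\Z/2$-map $\Conf_2^{\Delta}(K)\to S^{2d-1}$, hence none from the larger $\Conf_2(K)$), this gives Theorem~\ref{thm:vKF-intro} for all $d\ge 1$. The paper in fact takes a slightly different route: it proves the stronger bound $\alpha^{(2)}(f)\ge r_{2d-1}$ for \emph{almost} injective $f$ (Theorem~\ref{thm:quant-vk-f}), and the injective case follows since $P_f$ is the restriction of $\Phi_f$ to $\Conf_2^{\Delta}(K)\subseteq\Conf_2(K)$, whence $\alpha(f)=\delta(\Phi_f)\ge\delta(P_f)=\alpha^{(2)}(f)$.
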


In Section~\ref{subsec:vk-f} we study a wider notion of an \emph{almost injective} function from a simplicial complex $K$ into~$\R^d$, where the function does not identify two points from disjoint faces of~$K$.
In Theorem~\ref{thm:quant-vk-f} we prove a quantified extension of the classical van Kampen--Flores theorem which says that there is no almost injective function $\sk_d(\Delta_{2d+2}) \to \R^{2d}$.
The formulation of this result uses a slight variation $\alpha^{(2)}$ of the scale invariant modulus of discontinuity $\alpha$ adapted to almost embeddings.

Furthermore, Tverberg-type theory studies \emph{almost $r$-injective} functions~$f \colon~K \to \R^d$ for an integer $r \ge 2$, where $f$ does not identify $r$ points of $K$ lying in pairwise disjoint faces.
A continuous almost $r$-injective function is in the literature also called an \emph{almost $r$-embedding} \cite{mabillard2014eliminating}.
The topological Tverberg conjecture states that there is no almost $r$-embedding from the $(r-1)(d+1)$-dimensional simplex $\Delta_{(r-1)(d+1)}$ into $\R^d$.
B\'ar\'any, Shlosman and Sz\H ucs~\cite{BSSz1981} and \"Ozaydin~\cite{ozaydin1987} proved the conjecture for $r$ a prime power, while Frick~\cite{frick2015counterexamples} (see also Blagojevi\'c, Frick, and Ziegler~\cite{blagojevic2015barycenters}) established counterexamples for composite $r \le d/3$ using the $r$-fold Whitney trick of Mabillard and Wagner~\cite{mabillard2014eliminating}.
This was later improved upon by Avvakumov, Mabillard, Skopenkov, and Wagner~\cite{avvakumov2021eliminating} who obtained counterexamples for $r \le (d-1)/2$.
To this end, in Section \ref{sec:almost-r-emb} we introduce the $r$-fold version $\alpha^{(r)}$ of the scale invariant modulus of discontinuity to study and quantify the failure of the existence of almost $r$-injective functions from the simplex.
Namely, we establish the following:

\begin{theorem} [Quantified topological Tverberg] \label{thm:tv-intro}
    Let $d \ge 1$ be an integer and $r \ge 2$ a prime power.
Then, any almost $r$-injective function $f\colon \Delta_{(r-1)(d+1)} \to \R^d$ satisfies $\alpha^{(r)}(f) \ge  \arccos(\tfrac{-1}{d(r-1)})$.
\end{theorem}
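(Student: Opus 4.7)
The strategy is to reduce to the equivariant form of the classical topological Tverberg theorem via a Vietoris--Rips rigidity argument, mirroring the Haefliger--Weber-type Theorems \ref{thm:RP}, \ref{thm:StoR}, and \ref{thm:vKF-intro}. Set $N = (r-1)(d+1)$ and $K = \Delta_N$, and let $K^{(r)}_\Delta \subseteq K^r$ denote the space of $r$-tuples of points lying in pairwise disjoint faces of $K$, equipped with the $\mathfrak{S}_r$-action by coordinate permutation. Let $V \subset (\R^d)^r$ be the $d(r-1)$-dimensional sum-zero subspace carrying the restricted orthogonal permutation representation, and set $n = d(r-1)-1$, so that the unit sphere $S(V) \cong S^n$. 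Since $f$ is almost $r$-injective, the barycentric test map
\[
g(x_1,\dots,x_r) \;=\; \frac{\bigl(f(x_1) - \bar p,\;\dots,\;f(x_r) - \bar p\bigr)}{\bigl\|\bigl(f(x_1) - \bar p,\;\dots,\;f(x_r) - \bar p\bigr)\bigr\|}, \qquad \bar p = \tfrac{1}{r}\sum_{i=1}^r f(x_i),
\]
is a well-defined, $\mathfrak{S}_r$-equivariant, a priori discontinuous map $g\colon K^{(r)}_\Delta \to S(V)$.

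Suppose, for contradiction, that $\alpha^{(r)}(f) < \arccos\bigl(-\tfrac{1}{d(r-1)}\bigr) = r_n$. Unpacking the definition of $\alpha^{(r)}$ from Section~\ref{sec:almost-r-emb} bounds the pointwise angular oscillation of $g$: every point of $K^{(r)}_\Delta$ admits an open neighborhood on which $g$ takes values in a spherical ball of radius strictly less than $r_n$. Passing to an $\mathfrak{S}_r$-invariant open refinement (harmless since the group is finite and acts by isometries on $S(V)$) together with a subordinate invariant partition of unity, the standard nerve construction produces a continuous $\mathfrak{S}_r$-equivariant map
\[
\Phi\colon K^{(r)}_\Delta \longrightarrow \mathrm{VR}\bigl(S^n;\,c\bigr)
\]
into the Vietoris--Rips complex of $S^n$ at some scale $c < r_n$. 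The key topological input, the same engine underlying Theorems \ref{thm:RP}--\ref{thm:vKF-intro}, is that $\mathrm{VR}(S^n;c)$ is $O(n+1)$-equivariantly homotopy equivalent to $S^n$ for every $c < r_n$; composing $\Phi$ with such an equivalence yields a continuous $\mathfrak{S}_r$-equivariant map $\Psi\colon K^{(r)}_\Delta \to S^n$.

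To close the argument, restrict $\Psi$ to an elementary abelian subgroup $G = (\Z/p)^k \le \mathfrak{S}_r$ with $r = p^k$. \"Ozaydin's equivariant refinement of the B\'ar\'any--Shlosman--Sz\H ucs theorem, which is precisely the topological Tverberg theorem for prime-power $r$ in its obstruction-theoretic form, asserts that no continuous $G$-equivariant map $K^{(r)}_\Delta \to S(V) = S^n$ can exist, contradicting the existence of $\Psi$. The step I expect to be most delicate is the construction of $\Phi$: a naive choice of independent representatives in each member of the invariant cover only yields a map into $\mathrm{VR}(S^n;c)$ at scale $c < 2 r_n$, losing a factor of two to the triangle inequality through common intersection points. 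Ensuring that $c$ stays strictly below $r_n$ requires a coordinated, equivariant choice of representatives across the cover---exactly the combinatorial-geometric step that must already be handled uniformly in the proofs of the $r = 2$ Haefliger--Weber-type Theorems \ref{thm:RP} and \ref{thm:vKF-intro}. Once this is in place, the equivariant Vietoris--Rips-to-sphere collapse together with \"Ozaydin's obstruction closes the argument.
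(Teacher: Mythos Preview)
Your strategy coincides with the paper's: form the $\mathfrak{S}_r$-equivariant test map into $S(W_r^{\oplus d})$, use the bound on its modulus of discontinuity to land in a Vietoris--Rips object at scale below $r_{d(r-1)-1}$, collapse equivariantly back to the sphere via the homotopy equivalence~\eqref{eq:P(Sn)-homotopy}, and derive a contradiction from \"Ozaydin's obstruction (Proposition~\ref{proposition: nonexistence of equivariant map for tverberg}). The paper phrases everything through the metric thickening $\vrm{S(W_r^{\oplus d})}{\rho}$ rather than the complex, and packages your construction of $\Phi$ as a citation of \cite[Lemma~7.4]{GH-BU-VR}, but structurally the arguments match.

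The one place your writeup stops short is exactly the point you flag, and your diagnosis is slightly off. The factor-of-two loss is not a matter of ``coordinating'' representatives across the cover: the partition-of-unity-with-one-representative-per-open-set construction genuinely cannot do better than scale $2c$, because representatives in overlapping sets are only compared through a common third point. The fix---and this is what the cited lemma does---is to discard representatives and use $g$ itself. Take an $\mathfrak{S}_r$-equivariant subdivision of the cell complex $K^{(r)}_\Delta$ with mesh at most a Lebesgue number $\rho$ of your invariant cover, send each vertex $v$ to $g(v)$ (equivalently to the Dirac mass $\delta_{g(v)}$ when targeting the metric thickening), and extend affinely over simplices. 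Each simplex of the subdivision lies in a single member of the cover, so its vertex images already form a set of diameter at most $\alpha^{(r)}(f)+\varepsilon<r_n$; no triangle inequality is invoked. With this in place your argument is complete and agrees with the paper's proof.
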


As before, this theorem represents a quantified extension of the topological Tverberg theorem, as continuous almost $r$-injective functions (i.e., almost $r$-embeddings) necessarily have $\alpha^{(r)}=0$.\\

The rest of the document is organized as follows.
Section~\ref{sec:background} contains the notation and background.
In Section~\ref{sec:injectivity} we prove quantified nonembeddability results, which represent generalizations of Theorems~\ref{thm:RP} and~\ref{thm:StoR}, as well as the proof of Theorem~\ref{thm:vKF-intro}.
Finally, we develop quantified Tverberg-type theory in Section~\ref{sec:almost-r-emb}, where we prove Theorem~\ref{thm:tv-intro}.


\section{Background and preliminaries}
\label{sec:background}


\subsection*{General}

For topological spaces $X, Y$, a continuous function $f\colon~ X \to Y$ will be called a \emph{map} and the term \emph{function} will be used in a general (not necessarily continuous) sense.
We will refer to a topological space equipped with an action by a group $G$ as a \emph{$G$-space}, and we will refer to a function (or a map) $f \colon~ X \to Y$ between $G$-spaces which satisfies $f(g\cdot x)=g\cdot f(x)$, for all $g \in G$ and all $x\in X$, by a \emph{$G$-function} (or a \emph{$G$-map}).
In the case when $G =\Z/2$, we will say that a $\Z/2$-function (or a $\Z/2$-map) is \emph{odd}.


\subsection*{Discontinuity}

Let $X$ be a topological space, let $Y$ be a metric space, and let $f\colon X \to Y$.
We will use the term \emph{modulus of discontinuity of $f$} from \cite{dubins1981equidiscontinuity} to quantify the extent to which a function between $X$ and $Y$ is discontinuous.
Namely, for $f\colon X \to Y$, it is defined as
\[
    \delta(f) = \inf\{\delta\geq 0~|~\forall x\in X,\ \exists~\text{an open neighborhood } U_x \text{ of } x \text{ such that\ } \diam(f(U_x))\leq \delta\}.
\]
In particular, $f$ is continuous if and only if $\delta(f)=0$.
Dubins and Schwarz's 1981 paper~\cite{dubins1981equidiscontinuity} provides lower bounds on the modulus of discontinuity of odd functions between spheres.
Indeed, equip the $n$-dimensional sphere $S^n$ with the geodesic (path-length) metric, and let
\[
    r_n = \arccos\left(\frac{-1}{n+1}\right)
\]
be the diameter of the vertices of a regular $(n+1)$-simplex inscribed in $S^n$.
Dubins and Schwarz prove that an odd function $f\colon S^{n+1} \to S^n$ (which is necessarily discontinuous by the Borsuk--Ulam theorem) has modulus of discontinuity at least $r_n$, and this constant is tight.


\subsection*{Vietoris--Rips complexes and metric thickenings}

For a metric space $X$ and a real parameter $r\ge 0$, we denote by $\vr{X}{r}$ the \emph{Vietoris--Rips complex}, which is the simplicial complex whose vertices are the points of $X$, and whose faces are the finite subsets $\sigma \subseteq X$ with the diameter bound $\diam(\sigma) \le r$.
Here we used the ``less than or equal'' convention and there is an alternative ``less than'' convention defined as a subcomplex $\vrless{X}{r} \subseteq \vr{X}{r}$ containing faces of diameter strictly less than $r$.

For a family $\cF$ of subsets of $X$, we will denote by $\vrm{X}{\cF}$ the metric space of all probability measures on $X$ whose support is an element of $\cF$ endowed with a Wasserstein metric.
In the particular case when $\cF$ is the family of all finite subsets of $X$ with diameter at most $r > 0$, we will write $\vrm{X}{r}$ for this space and call it the \emph{Vietoris--Rips metric thickening} of $X$, as introduced in \cite{AAF}.
(We point out that there the space was denoted by $\text{VR}^{\text{m}}(X,r)$.)
Unlike in the case of the Vietoris--Rips complex, the standard inclusion $X \hookrightarrow \vrm{X}{r}$ is a continuous map and, moreover, an isometric embedding.
Similarly as before, there is the ``less than'' analogue $\vrmless{X}{r} \subseteq \vrm{X}{r}$ defined as the subspace of measures whose finite support has a diameter strictly less than $r$.
Throughout the paper we will mostly use $\vrm{X}{r}$, as opposed to $\vrmless{X}{r}$.

Vietoris--Rips complexes (i.e., their geometric realization) and metric thickenings are closely related.
They have identical underlying sets, but different topologies.
In particular, the identity map induces a \emph{continuous} function $\vr{X}{r} \to \vrm{X}{r}$ and Gillespie \cite{gillespie2024vietoris} proved that
\begin{equation} \label{eq:vr->vrm-weak-equiv}
    \vrless{X}{r} \longrightarrow \vrmless{X}{r}
\end{equation}
is a weak homotopy equivalence.

The homotopy types of $\vrm{S^n}{r}$ are not known in general.
However, Moy \cite{moyVRmS1} showed in the case $n=1$ and $r<\pi$ that $\vrm{S^1}{r} \simeq S^{2k+1}$, where $k \ge 0$ is the unique integer such that $\tfrac{2\pi k}{2k+1}\le r<\tfrac{2\pi(k+1)}{2k+3}$; see also \cite{AA-VRS1}.
For general $n \ge 1$, Adamaszek, Adams, and Frick \cite{AAF} showed
\begin{equation} \label{eq:P(Sn)-homotopy}
    \vrm{S^n}{r} \simeq \begin{cases} S^n, & r < r_{n}\\
    S^n * \frac{\so(n+1)}{A_{n+2}}, & r=r_n,
    \end{cases}
\end{equation}
where $A_{n+2} \subseteq \so(n+1)$ is the group of rotational symmetries of $\Delta_{n+1}$.
In particular, there are $\Z/2$-maps $\vrm{S^n}{r} \to S^n$, for $r < r_n$, and $\vrm{S^n}{r_n} \to S^n * (\so(n+1)/A_{n+2})$.
Here, we assume the $\Z/2$-action on $S^n * (\so(n+1)/A_{n+2})$ to be induced by an antipodal actions on $S^n$ and on the moduli space $\so(n+1)/A_{n+2}$ of regular $(n+1)$-simplices inscribed in $S^n$.


\subsection*{Constants}

For integers $k \ge n \ge 0$ we define a constant
\begin{equation*}
    c_{n,k} = \inf\{r \ge 0~\colon~\text{$\exists$ an odd map $S^{k} \longrightarrow \vr{S^{n}}{r}$}\}.
\end{equation*}
The known values and bounds on the constants $c_{n, k}$ follow from Theorems~5.1--5.3 of \cite{GH-BU-VR}:
\begin{itemize}
\item $c_{n, n} = 0$,
\item $c_{n, n + 2} = c_{n, n + 1} = r_{n} = \arccos\left(\frac{-1}{n+1}\right)$,
\item $c_{1,2k+1}=c_{1,2k}=\frac{2\pi k}{2k+1}$, and
\item $c_{n,k} \ge \pi - 2\,\cov_{\RP^n}(k)$ for all $k\ge n$; see~\cite{ABF2}.
\end{itemize}
Here $\cov_{\RP^n}(k)$ is the infimum over all $\varepsilon>0$ such that there exists a finite set $A \subseteq \RP^n$ of cardinality $|A| \le k$ so that the balls of radius $\varepsilon$ about $A$ cover $\RP^n$.
We have $c_{n,k}\ge c_{n,k'}$ for all $k\ge k'$, and so the second bullet implies that $c_{n, k} > 0$ for $k > n$.

The result of Dubins and Schwarz \cite{dubins1981equidiscontinuity} mentioned above was generalized by the present authors and collaborators in \cite[Theorems~1.3~and~7.6]{GH-BU-VR}, where it is proved that for $k \geq n$, any odd function $f\colon S^k \rightarrow S^n$ has modulus of discontinuity 
\begin{equation} \label{eq:oddmod}
	\delta(f)\geq c_{n, k}
\end{equation}
and that this bound is tight.
We remark that Lim, M\'{e}moli, and Smith~\cite{lim2021gromov} used Dubins and Schwarz' work on the modulus of discontinuity~\cite{dubins1981equidiscontinuity} in order to lower bound the Gromov--Hausdorff distance between spheres of different dimensions.
The existence of odd maps $S^k \to \vr{S^n}{r}$ was studied in Adams, Bush, and Frick~\cite{ABF2}, and related to coverings and packings in projective spaces.
These papers were the motivation for defining the constants $c_{n,k}$ in~\cite{GH-BU-VR}, which furthered the study of Gromov--Hausdorff distances between spheres.

The weak equivalence \eqref{eq:vr->vrm-weak-equiv} between Vietoris--Rips complexes and metric thickenings (with a ``less than'' convention) implies that the constant $c_{n,k}$ could alternatively be defined with $\vrm{S^n}{r}$ in place of $\vr{S^n}{r}$.
More generally, if a finite group $G$ acts on a CW complex $X$ by cellular maps and on a metric space $Y$ by isometries, then we have
\begin{equation} \label{eq:inf-vr-and-p}
    \inf\{r \ge 0 \colon~ \exists~ \text{continuous~} X \to_G \vr{Y}{r}\} = \inf\{r \ge 0 \colon~ \exists~ \text{continuous~} X \to_G \vrm{Y}{r}\}.
\end{equation}
We point out that the constant on the left hand side was described to us by Facundo M\'emoli in a Polymath meeting in 2022 (see also \cite[Definition~24]{lim2025g}). Indeed, the infimum on the left hand side is not smaller due to the $G$-map $\vr{X}{r} \to \vrm{X}{r}$ induced by the identity.
As for the other inequality, if there is a $G$-map $f \colon~ X \to \vrm{Y}{r}$, it can be lifted (up to $G$-equivariant homotopy) to a $G$-map $\widetilde{f} \colon~ X \to \vrless{Y}{r+\varepsilon} \subseteq \vr{Y}{r+\varepsilon}$, for any $\varepsilon >0$, as depicted in the following diagram of $G$-maps (that commutes up to $G$-homotopy):

\begin{equation*}
        \begin{tikzcd}
            {} & {} &  \vrless{Y}{r+\varepsilon} \arrow[d, "\sim"]\\
            X \arrow[r, "f"] \arrow[urr, dashed,  bend left=18, "\widetilde{f}"] &  \vrm{Y}{r} \arrow[r] & \vrmless{Y}{r+\varepsilon}.
        \end{tikzcd}
    \end{equation*}
    For $G$-spaces $A$ and $B$, we will denote by $[A,B]_G$ the set of $G$-maps $A \to B$ up to $G$-equivariant homotopy equivalence.
Then, the composition of the two horizontal arrows in the diagram defines an element $[f] \in [X, \vrmless{Y}{r+\varepsilon}]_G$.
By \cite[Proposition II.2.6]{tom1987transgroups}, the vertical arrow in the diagram, which is a weak homotopy equivalence and a $G$-map, induces a surjection
    \begin{equation*}
        [X, \vrless{Y}{r+\varepsilon}]_G \longrightarrow [X, \vrmless{Y}{r+\varepsilon}]_G,
    \end{equation*}
    which shows that there is an element $[\widetilde{f}] \in [X, \vrless{Y}{r+\varepsilon}]_G$ mapping to $[f]$. The equality \eqref{eq:inf-vr-and-p} was obtained in a different way by explicitly constructing maps between Vietoris-Rips complexes and metric thickenings (see \cite[Proposition~6.2]{lim2025g}). It is also possible to obtain a generalization of (\ref{eq:oddmod}) using the quantities in \eqref{eq:inf-vr-and-p} to the case when $G$ is a finite group and $f \colon X \to Y$ is any $G$-function between $G$-metric spaces. We do this in special cases of interest (see proofs of Theorems \ref{thm:quant-general}, \ref{thm:quant-vk-f}, and \ref{theorem: discontinuous tverberg}); for a general case, see \cite[Proposition 6.5]{lim2025g}.
    
    Our paper and the recent paper of Lim and M\'emoli \cite{lim2025g} are follow-up projects to the polymath-style paper \cite{GH-BU-VR}; as such, the two papers use related methods towards different goals. The main goal of \cite{lim2025g} is to define equivariant distances, obtain bounds, and establish equivariant rigidity results.


\subsection*{Coindex}

For a $\Z/2$-space $X$, its $\Z/2$-\emph{coindex} is defined as the integer
\begin{equation*}
    \coind_{\Z/2}(X) = \max\{k \in \Z_{\ge 0} \colon~ \text{$\exists$ an odd map $S^{k} \longrightarrow X$}\}.
\end{equation*}
It is monotone in the sense that if there exists a $\Z/2$-map $X\to Y$, then $\coind_{\Z/2}(X) \le \coind_{\Z/2}(Y)$.
In general, if $X$ is $(k-1)$-connected, then $\coind_{\Z/2}(X) \ge k$.
Moreover, Borsuk--Ulam theorem yields that $\coind_{\Z/2}(S^k) = k$, where $\Z/2$ is assumed to act antipodally on $S^k$.
We refer the reader to Matou\v{s}ek's book \cite[Section~5]{matousek2003using} for more details.
In the language of coindex, we can write the constants $c_{n,k}$ as $\inf\{r\colon~ \coind_{\Z/2}(\vr{S^n}{r} \ge k)\}$.


\section{Injective functions}
\label{sec:injectivity}

Given spaces $X$ and $Y$, an \emph{embedding} of $X$ into $Y$ is a map $f \colon X \to Y$ that is a homeomorphism onto its image.
Assuming $X$ is compact, this is equivalent to the map $f$ being injective.
If such a map $f$ exists, we say that $X$ \emph{embeds} into $Y$.
A fundamental problem in geometric topology is to determine, given a space $X$, the minimum dimension $d$ such that $X$ embeds into $\R^{d}$.
Classical results on this problem include the characterization of planar graphs (as graphs with no $K_5$ or $K_{3,3}$ minors)~\cite{kuratowski1930probleme, wagner1937eigenschaft}, Whitney's embedding theorem (that any smooth $d$-manifold embeds into~$\R^{2d}$)~\cite{whitney44}, the van Kampen--Flores theorem (that the $d$-skeleton of the $(2d+2)$-simplex does not embed into~$\R^{2d}$)~\cite{van1933komplexe}, and upper and lower bounds for the embedding dimension of real projective space~$\R P^n$; see~\cite{james1963immersion, sanderson1964immersions}.
Given a nonembeddability result of the form, ``$X$ does not embed into $\R^{d}$,'' we aim to quantify the lack of embeddability by asking, given an injective function $X \to \R^d$, how discontinuous must it be?

\medskip

The modulus of discontinuity is inadequate in this context, since there exist injective functions $f \colon \R^{k} \to \R^{d}$ with $\text{diam}(\text{range}(f)) < \varepsilon$ for arbitrary $k, d, \varepsilon$, even if $d < k$.
Hence given a space $X$ that embeds into any Euclidean space $\R^{k}$ (say, via $g \colon X \to \R^{k}$), the function $f \circ g \colon X \to \R^{d}$ is injective, and $\delta(f \circ g) \le \varepsilon$; that is, the modulus of discontinuity of injective functions $X \to \R^{d}$ can be made arbitrarily small.
This suggests that we need a different measure of discontinuity.


\subsection{Scale-invariant modulus of discontinuity $\alpha$}

\medskip

Let us denote by
\[
    \Conf_2(X) = \{(x,y) \in X \times X \colon ~ x \ne y\}
\]
the \emph{configuration space} of two distinct points in $X$,
equipped with the $\Z/2$-action which interchanges $x$ and $y$.
Given a function $f\colon X \to \R^d$, we consider the $\Z/2$-equivariant test function $f(x) - f(y)$, defined on $\Conf_2(X)$.
This test function detects injectivity in the sense that its image contains $0$ if and only if $f$ is not injective.
In particular, an injective function $f \colon X \to \R^d$ yields a $\Z/2$-equivariant map
\begin{equation}
\label{eq:test-inj}
\Phi_f \colon \Conf_2(X) \longrightarrow S^{d-1},~(x,y) \longmapsto \frac{f(x)-f(y)}{\| f(x) - f(y) \|}.
\end{equation}
Hence nonembeddability results may be obtained by obstructing the existence of $\Z/2$-equivariant maps $\Phi \colon \Conf_2(X) \to S^{d-1}$, for example by using the Borsuk--Ulam theorem.
On the other hand, in some settings the existence of $\Phi$ implies the existence of $f$, for example the following result by Haefliger in the smooth case and Weber in the simplicial complex case:

\begin{theorem}[Haefliger~\cite{Haefliger}, Weber~\cite{Weber}]
\label{thm:Haefliger}
Let $X$ be a smooth, closed manifold (resp.\ simplicial complex) of dimension $n$.
If $d > \frac32(n + 1)$, then there exists a differentiable (resp.\ linear) embedding $f \colon X \to \R^d$ if and only if there exists a $\Z/2$-equivariant map $\Phi \colon \Conf_2(X) \to S^{d-1}$.
\end{theorem}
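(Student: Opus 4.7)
The forward implication is immediate from equation~\eqref{eq:test-inj}: given an embedding $f$, the formula $\Phi_f(x,y) = (f(x) - f(y))/\|f(x) - f(y)\|$ has nonvanishing denominator by injectivity and picks up a sign under swapping $x \leftrightarrow y$, so $\Phi_f$ is a continuous $\Z/2$-equivariant map $\Conf_2(X) \to S^{d-1}$. The substance lies in the converse, and the plan is to follow the classical obstruction-theoretic strategy of Haefliger and Weber.

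For the converse I would begin by choosing a map $g \colon X \to \R^d$ in general position (smooth generic in the Haefliger setting, simplicial generic in the Weber setting). Transversality then forces the $\Z/2$-invariant double-point locus
\[
\Sigma(g) = \{(x,y) \in \Conf_2(X) : g(x) = g(y)\}
\]
to be a submanifold (resp.\ subcomplex) of dimension at most $2n - d$. The metastable hypothesis $d > \tfrac{3}{2}(n+1)$ is equivalent to $2(2n-d) < n - 3$, so $\dim \Sigma(g)$ lies well below half of $\dim X$; this is precisely the range in which the equivariant Whitney trick applies, as the ambient codimension $d - n$ is also at least $3$ in this regime. The next step is to identify, inside an equivariant cohomology group of $\Conf_2(X)$ with twisted integer coefficients, the primary obstruction to homotoping $g$ to an embedding, and to match it with the primary obstruction to the existence of an equivariant map $\Conf_2(X) \to S^{d-1}$. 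The map $\Phi$ furnished by the hypothesis provides a nullhomotopy of this common class; one then realizes this nullhomotopy geometrically by Whitney-type cancellation of pairs of double points of opposite sign along embedded Whitney disks, whose existence is guaranteed by the metastable dimension gap. In this range higher obstructions vanish for dimensional reasons, so the primary cancellation suffices and produces an honest embedding.

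I expect the main obstacle to be the geometric half of step three: making rigorous the correspondence between the double-point cancellation and the homotopical obstruction class, and executing the Whitney trick $\Z/2$-equivariantly with careful control over framings and signs at each cancelling pair. Haefliger carries this out in the smooth category via immersion-theoretic deformations in the style of Smale and Hirsch together with the Whitney trick, while Weber develops a parallel PL argument via regular neighborhoods and combinatorial collapses; each is a substantial argument on its own. For the purposes of the present paper I would not reproduce these proofs but rather invoke \cite{Haefliger} and \cite{Weber} as a black box.
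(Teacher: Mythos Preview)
Your proposal is appropriate: the paper does not prove Theorem~\ref{thm:Haefliger} at all but simply records it as a classical result with citations to~\cite{Haefliger} and~\cite{Weber}, exactly the black-box treatment you arrive at in your final sentence. The obstruction-theoretic outline you sketch (generic map, double-point locus, equivariant obstruction class, Whitney cancellation in the metastable range) is a reasonable high-level summary of what happens in those references, but goes beyond what the present paper requires or supplies.
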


These results establish a strong relationship between injectivity of $f$ and existence of $\Phi$.
Motivated by this, we define the following measure of discontinuity, illustrated in Figure \ref{fig:alpha}.

\begin{definition}
\label{def:alpha}
Let $X$ be a topological space, let $f \colon X \to \R^d$ be injective, and let $\Phi_f \colon \Conf_2(X) \to S^{d-1}$ be the induced function defined above in (\ref{eq:test-inj}).
The \emph{scale-invariant modulus of discontinuity} $\alpha(f)$ is defined as the modulus of discontinuity of $\Phi_f$; that is, $\alpha(f) = \delta(\Phi_f)$.
\end{definition}

\begin{figure}
    \centering
    \includegraphics[width=0.70\linewidth]{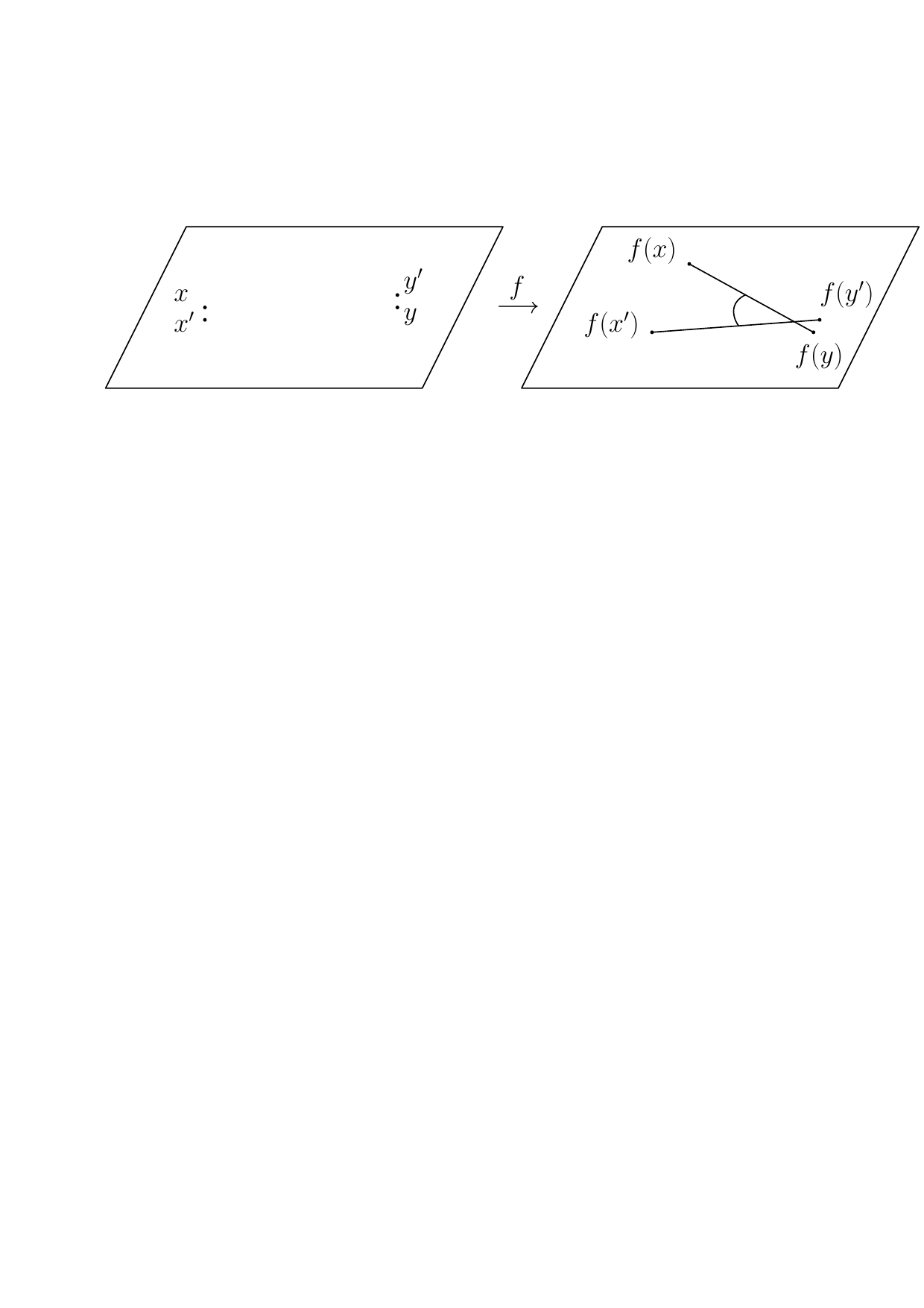}
    \caption{Two close pairs $(x,y), (x',y') \in \Conf_2(X)$ which induce a large angle between $f(x)-f(y)$ and $f(x')-f(y')$.}
    \label{fig:alpha}
\end{figure}

Our first general result is the following.

\begin{theorem} \label{thm:quant-general}
    Let $X$ compact topological space.
Assume that there does not exist a $\Z/2$-equivariant map $\Conf_2(X) \to S^{d-1}$.
Then, every injective function $f \colon X \to \R^d$ satisfies $\alpha(f) \ge r_{d-1} = \arccos{(-1/d)}$.
\end{theorem}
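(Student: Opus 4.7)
The plan is to argue by contradiction: assuming $\alpha(f) < r_{d-1}$, we will construct a continuous $\Z/2$-equivariant map $\Conf_2(X) \to S^{d-1}$, contradicting the hypothesis of the theorem. To this end, fix any $s \in (\alpha(f),\, r_{d-1})$ and use the definition $\alpha(f) = \delta(\Phi_f)$ to extract an open cover $\mathcal V = \{V_\beta\}$ of $\Conf_2(X)$ with $\diam \Phi_f(V_\beta) \le s$ in the geodesic metric on $S^{d-1}$. Oddness of $\Phi_f$ with respect to the swap involution $\tau$, together with the fact that the antipodal map is an isometry of $S^{d-1}$, allow us to enlarge $\mathcal V$ to $\mathcal V \cup \tau\mathcal V$ and so assume it is $\Z/2$-invariant as a family.

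The next step is to use the Vietoris--Rips metric thickening. Since $\Conf_2(X)$ is an open subspace of the compact metric space $X \times X$ (in the applications of interest $X$ is metrizable), it is paracompact and admits a $\Z/2$-equivariant open star refinement $\mathcal U = \{U_\alpha\}$ of $\mathcal V$. Together with a subordinate $\Z/2$-equivariant locally finite partition of unity $\{\psi_\alpha\}$ and equivariantly chosen representatives $z_\alpha \in U_\alpha$ (satisfying $z_{\tau \alpha} = \tau z_\alpha$, which is possible because the swap action is free), we shall define
\[
    G \colon \Conf_2(X) \longrightarrow \vrm{S^{d-1}}{s},\qquad G(p) = \sum_\alpha \psi_\alpha(p)\,\delta_{\Phi_f(z_\alpha)}.
\]
To verify that $G$ lands in $\vrm{S^{d-1}}{s}$, fix any $p$ and any $\alpha_0$ with $p \in U_{\alpha_0}$. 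For every $\alpha$ with $\psi_\alpha(p) > 0$ the intersection $U_\alpha \cap U_{\alpha_0}$ contains $p$, so the star property places $U_\alpha$ inside the cover element $V_\beta$ that contains $\operatorname{st}(U_{\alpha_0})$. Hence every active $z_\alpha$ lies in $V_\beta$ and the support of $G(p)$ lies in $\Phi_f(V_\beta)$, which has geodesic diameter at most $s$. Continuity of $G$ is standard by local finiteness, and $\Z/2$-equivariance follows from the oddness of $\Phi_f$ together with equivariance of $\{\psi_\alpha\}$ and $\{z_\alpha\}$. Post-composing $G$ with the $\Z/2$-equivariant map $\vrm{S^{d-1}}{s} \to S^{d-1}$ supplied by \eqref{eq:P(Sn)-homotopy} and the subsequent discussion (valid since $s < r_{d-1}$) yields the forbidden $\Z/2$-map and completes the contradiction.

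The main obstacle I expect is obtaining the tight constant $r_{d-1}$ rather than the weaker $r_{d-1}/2$. A partition of unity subordinate directly to $\mathcal V$ would only guarantee that each active $\Phi_f(z_\beta)$ lies within geodesic distance $s$ of $\Phi_f(p)$, so the triangle inequality would only yield diameter $\le 2s$ on the support of $G(p)$. Passing to a star refinement is precisely the move that forces every pair of active representatives into a common cover element $V_\beta$ and therefore into a set of geodesic diameter $\le s$; this is the key technical step, and everything else in the argument is bookkeeping around the equivariance of the cover, the partition of unity, and the choices of representatives.
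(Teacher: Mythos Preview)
Your argument is correct and follows the same overall strategy as the paper: produce a continuous $\Z/2$-map $\Conf_2(X)\to\vrm{S^{d-1}}{s}$ for some $s<r_{d-1}$, then compose with the equivariant map $\vrm{S^{d-1}}{s}\to S^{d-1}$ supplied by~\eqref{eq:P(Sn)-homotopy} to reach a contradiction. The difference lies only in how that first map is obtained. The paper invokes a metric-thickening generalization of \cite[Lemma~7.4]{GH-BU-VR} as a black box, factoring through $\vrm{\Conf_2(X)}{\rho}$ and asserting that the map induced by $\Phi_f$ into $\vrm{S^{d-1}}{\delta(\Phi_f)+\varepsilon}$ is continuous and equivariant. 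You instead build the map by hand with an equivariant partition of unity, and your star-refinement step---forcing all active representatives into a single $V_\beta$ so that the support of $G(p)$ has diameter $\le s$ rather than $\le 2s$---is precisely the mechanism hidden inside that lemma. Your version is self-contained and makes the role of the sharp constant transparent; the paper's is shorter because the work has been outsourced. The paracompactness/metrizability caveat you flag for $\Conf_2(X)$ is likewise implicit in the paper's appeal to the cited lemma, so nothing is lost there either.
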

\begin{proof}
    Let $f \colon~ X \to \R^{d}$ be an injective function.
A straightforward generalization of \cite[Lemma~7.4]{GH-BU-VR} to metric thickenings implies that for any $\varepsilon > 0$ there exists a parameter $\rho = \rho(\varepsilon)$ such that the function
    \begin{equation*}
        \vrm{\Conf_2(X)}{\rho} \longrightarrow \vrm{S^{d-1}}{\delta(\Phi_f) + \varepsilon}
    \end{equation*}
    induced from $\Phi_f$ is continuous and $\Z/2$-equivariant.
As noted in Section \ref{sec:background}, there is a canonical $\Z/2$-inclusion $\Conf_2(X) \hookrightarrow \vrm{\Conf_2(X)}{\rho}$, which then implies the existence of a $\Z/2$-map
    \begin{equation}\label{eq:conf-to-P} 
        \Conf_2(X) \longrightarrow \vrm{S^{d-1}}{\delta(\Phi_f) + \varepsilon}.
    \end{equation}
    If $\delta(\Phi_f)  + \varepsilon < r_{d-1}$, there would exist a $\Z/2$-map $\vrm{S^{d-1}}{\delta(\Phi_f) + \varepsilon} \to S^{d-1}$ from \eqref{eq:P(Sn)-homotopy}, which precomposed with \eqref{eq:conf-to-P} would yield a $\Z/2$-map
    \begin{equation*}
        \Conf_2(X) \longrightarrow S^{d-1}.
    \end{equation*}
    This is a contradiction with our assumption, hence $\delta(\Phi_f)  + \varepsilon \ge r_{d-1}$.
Letting $\varepsilon \to 0$ we obtain the claim $\alpha(f) = \delta(\Phi_f) \ge r_{d-1}$.
\end{proof}

Combining this with the Haefliger--Weber theorem, we obtain a quantified version of their results.

\begin{corollary}\label{cor:quant-hw}
    Let $X$ be a smooth, closed manifold (resp.\ simplicial complex) of dimension $n$.
Assume that there does not exists a differentiable (resp.\ linear) embedding $f \colon X \to \R^d$ for $d > \frac32(n + 1)$.
Then, every injective function $f \colon X \to \R^d$ satisfies $\alpha(f) \ge r_{d-1} = \arccos{(-1/d)}$.
\end{corollary}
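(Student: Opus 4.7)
The plan is to derive this corollary as an immediate combination of the Haefliger--Weber theorem (Theorem \ref{thm:Haefliger}) with the general quantified nonembeddability result just established (Theorem \ref{thm:quant-general}). The key observation is that Haefliger--Weber provides a topological obstruction of exactly the form that Theorem \ref{thm:quant-general} is set up to use, namely the nonexistence of a $\Z/2$-equivariant map from the configuration space $\Conf_2(X)$ into $S^{d-1}$.

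First I would note that a closed manifold (respectively a finite simplicial complex) is compact, so $X$ satisfies the hypothesis of Theorem \ref{thm:quant-general}. Next I would apply the contrapositive of Theorem \ref{thm:Haefliger} in the metastable range $d > \tfrac{3}{2}(n+1)$: since by assumption no differentiable (respectively linear) embedding $X \hookrightarrow \R^d$ exists, the equivalence furnished by Haefliger--Weber forces the nonexistence of any $\Z/2$-equivariant map $\Phi \colon \Conf_2(X) \to S^{d-1}$.

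With this obstruction secured, Theorem \ref{thm:quant-general} applies verbatim and yields $\alpha(f) \ge r_{d-1} = \arccos(-1/d)$ for every injective function $f \colon X \to \R^d$, completing the proof.

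There is no real obstacle here; the conceptual work has already been done in proving Theorem \ref{thm:quant-general} (which routes through the weak equivalence between $\vrless{S^{d-1}}{r}$ and $\vrmless{S^{d-1}}{r}$, together with the homotopy type of $\vrm{S^{d-1}}{r}$ for $r < r_{d-1}$). The one point worth emphasizing is that, although Haefliger--Weber is a statement about embeddings (continuous injections), the conclusion we draw bounds $\alpha(f)$ for \emph{every} injective function, including discontinuous ones; this is exactly the quantification that Theorem \ref{thm:quant-general} supplies on top of the classical obstruction.
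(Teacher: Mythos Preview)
Your proposal is correct and matches the paper's approach exactly: the paper gives no explicit proof of this corollary, simply remarking that it follows by combining Theorem~\ref{thm:quant-general} with the Haefliger--Weber theorem, which is precisely the argument you spell out.
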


Theorem \ref{thm:quant-general} works under the assumption of the nonexistence of a $\Z/2$-map $\Conf_2(X) \to S^{d-1}$ and provides a general lower bound for $\alpha$.
However, we can obtain an improved bound by understanding the ``degree'' of the failure of the existence of the $\Z/2$-map.
The lower bound depends on the topology of Vietoris--Rips complexes of spheres (see Section \ref{sec:background}).

\begin{theorem}
\label{thm:alpha-coind}
Let $X$ be a topological space, let $k = \operatorname{coind}_{\Z/2}(\Conf_2(X))$, and assume $k \ge d - 1$.
Then any injective function $f \colon X \to \R^d$ satisfies $\alpha(f) \geq c_{d-1,k}$.
\end{theorem}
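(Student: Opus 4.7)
The plan is to parallel the proof of Theorem \ref{thm:quant-general}, replacing its terminal contradiction (from the absence of an equivariant map $\Conf_2(X)\to S^{d-1}$) with a numerical comparison against the constant $c_{d-1,k}$ that is unlocked by the coindex hypothesis. The two ingredients I will reuse are the metric-thickening generalization of \cite[Lemma 7.4]{GH-BU-VR} already invoked in that proof, and the observation recorded in Section \ref{sec:background} that the infimum parameter for the existence of a continuous odd map into $\vrm{S^{d-1}}{r}$ coincides with the one for $\vr{S^{d-1}}{r}$, so that $c_{d-1,k}$ may equivalently be computed using metric thickenings.

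First, I will take an injective $f\colon X\to\R^d$ and form the $\Z/2$-equivariant test map $\Phi_f\colon \Conf_2(X)\to S^{d-1}$ from \eqref{eq:test-inj}; by Definition \ref{def:alpha}, $\delta(\Phi_f)=\alpha(f)$. Fixing $\varepsilon>0$, the same thickened lemma as above supplies a scale $\rho>0$ and a continuous $\Z/2$-map
\[
    \vrm{\Conf_2(X)}{\rho} \longrightarrow \vrm{S^{d-1}}{\alpha(f)+\varepsilon}.
\]
Precomposing with the canonical isometric $\Z/2$-inclusion $\Conf_2(X)\hookrightarrow \vrm{\Conf_2(X)}{\rho}$ yields a $\Z/2$-map $\Psi\colon \Conf_2(X)\to \vrm{S^{d-1}}{\alpha(f)+\varepsilon}$.

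Next, I will use the coindex hypothesis $k=\coind_{\Z/2}(\Conf_2(X))$ to choose an odd map $\sigma\colon S^k\to\Conf_2(X)$. Composition gives an odd map $\Psi\circ\sigma\colon S^k\to \vrm{S^{d-1}}{\alpha(f)+\varepsilon}$. By the equivalence of the two infimum formulations of $c_{d-1,k}$ discussed in Section \ref{sec:background}, the existence of such a map forces $c_{d-1,k}\le \alpha(f)+\varepsilon$, and letting $\varepsilon\to 0$ produces the desired bound $\alpha(f)\ge c_{d-1,k}$.

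I do not expect any genuinely hard step: the argument is a direct refinement of the proof of Theorem \ref{thm:quant-general}, with the nonembeddability contradiction replaced by a quantitative comparison. The only point deserving care is verifying that the thickened version of \cite[Lemma 7.4]{GH-BU-VR} really does furnish a continuous and $\Z/2$-equivariant map between thickenings (already used above), and that the metric-thickening reformulation of $c_{d-1,k}$ is applicable in this equivariant setting. The standing hypothesis $k\ge d-1$ is needed only to ensure that $c_{d-1,k}$ lies in the regime $k\ge n = d-1$ from Section \ref{sec:background}; the resulting inequality is genuinely informative exactly when $k\ge d$, where $c_{d-1,k}>0$.
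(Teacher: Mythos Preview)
Your argument is correct, but the paper takes a shorter and more direct route. Rather than rerunning the metric-thickening construction from Theorem~\ref{thm:quant-general}, the paper simply chooses an odd map $g\colon S^k\to\Conf_2(X)$ from the coindex hypothesis, observes that $\Phi_f\circ g\colon S^k\to S^{d-1}$ is an odd (possibly discontinuous) function, and applies Theorem~\ref{thm:oddmod} directly to obtain $\delta(\Phi_f\circ g)\ge c_{d-1,k}$. The proof then finishes with the elementary inequality $\alpha(f)=\delta(\Phi_f)\ge\delta(\Phi_f\circ g)$, which holds because precomposing with a continuous map cannot increase the modulus of discontinuity (cited as \cite[Lemma~7.10(ii)]{GH-BU-VR}).

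The difference is one of packaging: Theorem~\ref{thm:oddmod} already encapsulates the Vietoris--Rips/thickening machinery, so the paper treats it as a black box, whereas you unpack part of that box by building the continuous odd map $S^k\to\vrm{S^{d-1}}{\alpha(f)+\varepsilon}$ yourself. One small practical advantage of the paper's route is that it never forms a thickening of the domain $\Conf_2(X)$, so no metric structure on $X$ (implicit in your use of $\vrm{\Conf_2(X)}{\rho}$) is needed; this matches the bare ``topological space'' hypothesis in the statement.
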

\begin{proof}
By the assumption on coindex, there exists a $\Z/2$-map $g \colon~ S^k \to \Conf_2(X)$.
Note the composition $\Phi_f\circ g \colon S^k\to S^{d-1}$ is a $\Z/2$-map.
Therefore by~\eqref{eq:oddmod}, $\delta(\Phi_f \circ g) \geq c_{d-1,k}$.
Finally, we obtain the claim since $\alpha(f)=\delta(\Phi_f) \ge \delta(\Phi_f \circ g)$ by \cite[Lemma 7.10(ii)]{GH-BU-VR}.
\end{proof}

\begin{corollary}
Assume $k \ge d - 1$.
Then we have the following results:
\begin{itemize}
\item 
Any injective function $f \colon \R^{k + 1} \to \R^d$ satisfies $\alpha(f) \ge c_{d - 1, k}$.
\item
Any injective function $f \colon S^{k} \to \R^d$ satisfies $\alpha(f) \ge c_{d - 1, k}$.
\end{itemize}
\end{corollary}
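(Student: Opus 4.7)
The plan is to recognize that the corollary is essentially an immediate consequence of Theorem~\ref{thm:alpha-coind} once we verify its coindex hypothesis in each of the two cases. Since Theorem~\ref{thm:alpha-coind} only requires a $\Z/2$-map $S^k \to \Conf_2(X)$ (the statement is phrased with the coindex but the proof just uses such a map), it suffices to exhibit one for $X = \R^{k+1}$ and for $X = S^k$.

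In both cases I will use the same explicit formula: define $g \colon S^k \to \Conf_2(X)$ by $g(v) = (v, -v)$. For $X = \R^{k+1}$ we view $S^k$ as the unit sphere, so $v$ and $-v$ are distinct points of $\R^{k+1}$; for $X = S^k$ we simply use that $v \ne -v$ in $S^k$. Thus $g$ is a well-defined continuous function into $\Conf_2(X)$. With the antipodal $\Z/2$-action on $S^k$ and the coordinate-swap action on $\Conf_2(X)$, one checks $g(-v) = (-v, v) = \tau \cdot g(v)$, so $g$ is $\Z/2$-equivariant. This shows $\coind_{\Z/2}(\Conf_2(X)) \ge k$ in each case.

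Applying Theorem~\ref{thm:alpha-coind} then yields $\alpha(f) \ge c_{d-1,k}$ for any injective $f \colon X \to \R^d$, provided $k \ge d-1$, which is exactly the hypothesis of the corollary. There is no real obstacle here; the only subtle point is to remember that the $\Z/2$-action on $\Conf_2(X)$ is the swap action, so the antipodal $v \mapsto -v$ on $S^k$ must be matched with coordinate interchange on the target, which is precisely what the map $v \mapsto (v, -v)$ accomplishes.
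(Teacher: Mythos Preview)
Your argument is correct and essentially the same as the paper's: both exhibit the $\Z/2$-map $S^k \to \Conf_2(X)$, $v \mapsto (v,-v)$, and then invoke Theorem~\ref{thm:alpha-coind}. The paper additionally composes with $\Conf_2(\R^{k+1}) \to S^k$, $(x,y)\mapsto (x-y)/\|x-y\|$, to pin down the coindex as exactly~$k$ via Borsuk--Ulam, but as you observe only the lower bound $\coind_{\Z/2}(\Conf_2(X)) \ge k$ is needed here.
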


\begin{proof}
We define a chain of $\Z/2$-maps:
\begin{equation*}
S^{k} \longrightarrow \Conf_2(S^{k}) \longrightarrow \Conf_2(\R^{k + 1}) \longrightarrow S^{k}.
\end{equation*}
The individual $\Z/2$-maps are defined as follows:
\begin{itemize}
\item  The first map $S^{k} \to \Conf_2(S^{k})$ is defined by $x \mapsto (x, -x)$.
\item  The second map $\Conf_2(S^{k}) \to \Conf_2(\R^{k + 1})$ is induced by the inclusion $S^{k} \subseteq \R^{k + 1}$.
\item The third map $\Conf_2(\R^{k + 1}) \to S^{k}$ is defined by $(x, y) \mapsto \frac{x - y}{\|x - y\|}$.
\end{itemize}
By the Borsuk--Ulam theorem, this shows $\operatorname{coind}_{\Z/2}(\Conf_2(S^{k})) = \operatorname{coind}_{\Z/2}(\Conf_2(\R^{k + 1})) = k$, so the corollary follows from Theorem \ref{thm:alpha-coind}.
\end{proof}

Note that if $k + 1 > d$, then $c_{d - 1, k} > 0$, so $\Phi_{f}$ is discontinuous and hence so is $f$.
Therefore, this corollary represents a generalization of the known nonembeddability results on Euclidean spaces.


\subsection{Suitability of $\alpha$ for detecting discontinuity} \label{sec:suitability-alpha}
If an injective function $f\colon X \to \R^d$ is continuous, then so is $\Phi_f$, and hence $\alpha(f) = \delta(\Phi_f) = 0$.
We will show that under a mild assumption, a converse statement also holds:

\begin{theorem}
\label{thm:noline}
Let $f \colon X \to \R^d$ be an injective function.
Suppose that the image $f(X)$ is not contained in an affine line in $\R^d$.
Then, $\alpha(f) = 0$ if and only if $f$ is continuous.
\end{theorem}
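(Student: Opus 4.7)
\medskip

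The forward implication is immediate: if $f$ is continuous, then $\Phi_f(x,y) = (f(x)-f(y))/\|f(x)-f(y)\|$ is a composition of continuous maps on $\Conf_2(X)$, so $\alpha(f) = \delta(\Phi_f) = 0$. For the reverse implication I assume $\alpha(f) = 0$, which is equivalent to $\Phi_f$ being continuous, fix $x_0 \in X$, and show that $f$ is continuous at $x_0$.

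The first step is to produce two ``anchor points'' $y_1, y_2 \in X\setminus\{x_0\}$ with $y_1 \ne y_2$ and such that $f(x_0), f(y_1), f(y_2)$ are affinely independent. If no such pair existed, then for any fixed $y \in X \setminus \{x_0\}$ (so $f(y) \ne f(x_0)$ by injectivity) and any other $y' \in X\setminus\{x_0,y\}$, the points $f(x_0), f(y), f(y')$ would be collinear, forcing $f(y')$ to lie on the unique line $L$ through $f(x_0)$ and $f(y)$; then $f(X) \subseteq L$, contradicting the hypothesis. Having chosen $y_1, y_2$, set $u_i := \Phi_f(x_0, y_i)$ and note that $u_1, u_2$ are linearly independent. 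By continuity of $\Phi_f$ at $(x_0,y_1)$ and $(x_0,y_2)$, for any prescribed accuracy there is a neighborhood $U$ of $x_0$ (disjoint from $\{y_1,y_2\}$, using Hausdorffness) such that the unit vectors $u'_i(x) := \Phi_f(x, y_i)$ are as close to $u_i$ as desired for all $x \in U$; in particular, they remain linearly independent.

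The second step is to reconstruct $f(x)$ from the two directions. For every $x \in U$ the definition of $\Phi_f$ gives
\begin{equation*}
f(x) = f(y_i) + s_i(x)\, u'_i(x),\qquad s_i(x) := \|f(x) - f(y_i)\| > 0,\quad i=1,2,
\end{equation*}
so equating the two expressions yields the linear system
\begin{equation*}
s_1(x)\, u'_1(x) - s_2(x)\, u'_2(x) = f(y_2) - f(y_1).
\end{equation*}
Since $u'_1(x), u'_2(x)$ are linearly independent, this system has at most one solution in $\R^2$, and the pair $(s_1(x), s_2(x))$ realized by $f(x)$ is that unique solution.

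The final step is a continuity argument for the reconstruction. The pair $(s_1,s_2)$ can be written as $M(x)^{+}(f(y_2)-f(y_1))$ where $M(x) = [\,u'_1(x)\mid -u'_2(x)\,]$ is a $d\times 2$ matrix of full column rank and $M(x)^+ = (M(x)^\top M(x))^{-1}M(x)^\top$ is its Moore--Penrose pseudoinverse; this depends continuously on $M(x)$ as long as $M(x)$ retains rank two. Letting $x \to x_0$ through $U$, we have $u'_i(x) \to u_i$, hence $(s_1(x), s_2(x)) \to (t_1, t_2) := (\|f(x_0)-f(y_1)\|, \|f(x_0)-f(y_2)\|)$, and therefore $f(x) = f(y_1) + s_1(x)\,u'_1(x) \to f(y_1) + t_1 u_1 = f(x_0)$, which gives continuity of $f$ at $x_0$. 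The main obstacle is Step 2 (choosing the anchor points): without the non-collinearity of $f(X)$ one cannot force $u_1, u_2$ to be linearly independent, and the reconstruction collapses since a single ray does not determine its endpoint. Once linear independence is in hand, the rest of the argument is a direct continuity-of-pseudoinverse computation.
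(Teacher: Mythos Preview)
Your proof is correct and takes a genuinely different route from the paper's. The paper argues by contrapositive: given a point $x$ of discontinuity, it extracts a sequence $x_n \to x$ with $f(x_n)$ bounded away from $f(x)$, passes to a subsequence so that the unit vectors $(f(x)-f(x_n))/\|f(x)-f(x_n)\|$ converge to some direction $u$, and only \emph{then} picks a single witness $y$ with $f(y)$ off the line $\ell$ through $f(x)$ in direction~$u$; a case split (bounded versus unbounded $\{f(x_n)\}$) shows that $\Phi_f(x_n,y)$ stays at a definite angle from $\Phi_f(x,y)$, forcing $\alpha(f)>0$. Your argument is direct and constructive: you fix two anchors $y_1,y_2$ \emph{in advance} so that $f(x_0),f(y_1),f(y_2)$ are affinely independent, and recover $f(x)$ near $x_0$ by triangulation from the two sight-lines, with continuity following from the continuity of the Moore--Penrose pseudoinverse on full-rank matrices. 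Your approach avoids subsequences and the bounded/unbounded case split, and makes the geometric content (two noncollinear directions determine position) transparent; the paper's approach needs only one anchor, chosen adaptively after the limiting direction is known. Both proofs carry a mild unstated hypothesis on $X$: yours uses $T_1$ (to get an open neighborhood of $x_0$ avoiding $\{y_1,y_2\}$), while the paper's sequential characterization of discontinuity implicitly uses first-countability.
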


We may interpret the theorem as follows.
We know that $\alpha(f) = \delta(\Phi_f) = 0$ if and only if $\Phi_f$ is continuous.
When does continuity of $\Phi_f$ imply continuity of $f$?
Continuity of $\Phi_f$ implies that for fixed $y \in X$, the restriction of $\Phi_f$ given by $(\Phi_f)_y \colon X - \left\{ y \right\} \to S^{d-1} \colon x \mapsto \Phi_f(x,y)$ is continuous.
The function $(\Phi_f)_y$ can see the direction of $f(x)-f(y)$ but not the distance $\|f(x)-f(y)\|$.
Thus $(\Phi_f)_y$ can detect a discontinuity of $f$ at a point $x$ as long as the discontinuity does not manifest on the line through $f(y)$ and $f(x)$.
In particular, as long as the image of $f$ is not contained in a line, any discontinuity of $f$ can be witnessed by some $y$.

Moreover, the assumption on the image of $f$ from Theorem  \ref{thm:noline} is fairly mild.
For example, if the domain $X$ equals $S^1$ or is a manifold/CW complex of dimension at least two, then the condition is automatically satisfied, as shown by the following.

\begin{corollary}
\label{cor:dim2}
Let $f \colon X \to \R^d$ be an injective function.
Suppose there exists a map $\psi \colon~ S^1 \to X$ such that $\psi(x) \neq \psi(-x)$ for all $x \in S^1$.
Then $\alpha(f) = 0$ if and only if $f$ is continuous.
\end{corollary}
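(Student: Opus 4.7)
The plan is to reduce to Theorem \ref{thm:noline} by showing that the hypothesis on $\psi$ rules out the only excluded case, namely that $f(X)$ lies on an affine line of $\R^d$. The forward direction (``$f$ continuous implies $\alpha(f)=0$'') is immediate and does not use $\psi$: injectivity of $f$ guarantees $f(x) \neq f(y)$ on $\Conf_2(X)$, so the formula defining $\Phi_f$ produces a composition of continuous functions, whence $\alpha(f) = \delta(\Phi_f) = 0$.

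For the converse, suppose $\alpha(f) = 0$, so that $\Phi_f$ is continuous. I would argue by contradiction that $f(X)$ cannot lie on an affine line $L \subseteq \R^d$. If it did, letting $v \in S^{d-1}$ be a unit direction of $L$, then for every $(x,y) \in \Conf_2(X)$ the difference $f(x) - f(y)$ would be a nonzero scalar multiple of $v$, so $\Phi_f$ would take values in the two-point antipodal set $\{+v,-v\} \cong S^0$. Thus $\Phi_f$ would be a continuous $\Z/2$-equivariant map $\Conf_2(X) \to S^0$. Meanwhile, the hypothesis on $\psi$ yields a continuous $\Z/2$-equivariant map
\[
\tilde\psi \colon S^1 \longrightarrow \Conf_2(X), \qquad x \longmapsto (\psi(x),\psi(-x)),
\]
where the $\Z/2$-action on the source is antipodal and on the target swaps coordinates; continuity is inherited from $\psi$ (the condition $\psi(x)\neq\psi(-x)$ ensures that $\tilde\psi$ lands in the configuration space), and equivariance is immediate. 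Composing produces a continuous $\Z/2$-map $\Phi_f \circ \tilde\psi \colon S^1 \to S^0$, which is impossible by Borsuk--Ulam since a continuous map from the connected space $S^1$ into the discrete two-point space is constant and hence cannot be odd. Therefore $f(X)$ is not contained in any affine line, and Theorem \ref{thm:noline} yields that $f$ is continuous.

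The main subtlety to watch for is that $f$ itself is not assumed continuous when eliminating the line case; the argument must take place at the level of $\Phi_f$, whose continuity is extracted from the assumption $\alpha(f) = 0$ before invoking Borsuk--Ulam. Once this is done, no further obstacle arises and the corollary follows.
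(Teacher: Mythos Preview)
Your argument is correct and essentially identical to the paper's: both rule out the line case by composing $\Phi_f$ (continuous because $\alpha(f)=0$) with the equivariant map $S^1\to\Conf_2(X)$ induced by $\psi$ to obtain a $\Z/2$-map $S^1\to S^0$ contradicting Borsuk--Ulam, and then invoke Theorem~\ref{thm:noline}. Your explicit note that continuity of $\Phi_f$, not of $f$, is what is used is a worthwhile clarification.
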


\begin{proof}
In order to apply Theorem~\ref{thm:noline}, we will show that it cannot happen that the image of $f$ is contained in an affine line, that $\alpha(f) = 0$, and that there is a map $\psi \colon~ S^1 \to X$ with $\psi(x) \neq \psi(-x)$ for all $x \in X$.
Indeed, if the image of $f$ is contained in an affine line, then $f$ may be considered as a function $X \to \R$.
If additionally $0 = \alpha(f) = \delta(\Phi_f)$, then $\Phi_f \colon \Conf_2(X) \to S^0$ is a continuous $\Z/2$-map.
Finally, $\psi$ induces a $\Z/2$-map $\Psi \colon~ S^1 \to \Conf_2(X)$ by $x \mapsto (\psi(x), \psi(-x))$, and a $\Z/2$-map $\Phi_f \circ \Psi \colon~ S^1 \to S^0$ contradicts the Borsuk--Ulam theorem.
\end{proof}


The following lemma shows that the scale-invariant modulus of discontinuity fails to detect discontinuities of functions from general 1-dimensional domains; for example, a monotone function $f \colon \R \to \R$ with jump discontinuities still has $\alpha(f) = 0$:

\begin{lemma}
Let $f \colon \R \to \R$ be injective.
Then, $\alpha(f) = 0$ if and only if $f$ is monotone.
\end{lemma}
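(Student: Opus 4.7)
\medskip

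\noindent\textbf{Proof proposal.} The crucial observation is that for $f\colon \R\to\R$ we are in the degenerate case $d=1$, so that the test function lands in the discrete two-point space: $\Phi_f\colon \Conf_2(\R)\to S^{d-1}=S^0=\{-1,+1\}$. The plan is to exploit this discreteness together with the fact that $\Conf_2(\R)$ has exactly two connected components.

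First I would record the domain structure: $\Conf_2(\R)=C_+\sqcup C_-$, where $C_+=\{(x,y)\in\R^2:x>y\}$ and $C_-=\{(x,y)\in\R^2:x<y\}$ are each open, convex, hence connected, and are swapped by the $\Z/2$-action. Then I would observe that since $S^0$ is discrete, the condition $\delta(\Phi_f)=0$ (equivalently $\alpha(f)=0$) is equivalent to $\Phi_f$ being locally constant: any neighborhood $U_{(x,y)}$ with $\diam(\Phi_f(U_{(x,y)}))<2$ must have $\Phi_f$ constant on it. A locally constant function on a connected space is constant, so $\alpha(f)=0$ if and only if $\Phi_f$ is constant on each of $C_+$ and $C_-$ (and by equivariance, the two constants are opposite signs).

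For the ``if'' direction, suppose $f$ is monotone. Since $f$ is injective it is strictly monotone, so $\operatorname{sign}(f(x)-f(y))$ depends only on $\operatorname{sign}(x-y)$. Thus $\Phi_f$ takes the value $+\varepsilon$ on $C_+$ and $-\varepsilon$ on $C_-$ for some $\varepsilon\in\{-1,+1\}$ (the sign recording whether $f$ is increasing or decreasing), so $\Phi_f$ is locally constant and $\alpha(f)=\delta(\Phi_f)=0$.

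For the ``only if'' direction, suppose $\alpha(f)=0$. Then by the dichotomy above, there exists $\varepsilon\in\{-1,+1\}$ such that $\Phi_f\equiv\varepsilon$ on $C_+$; equivalently, $\operatorname{sign}(f(x)-f(y))=\varepsilon$ for all $x>y$. If $\varepsilon=+1$ then $f$ is strictly increasing, and if $\varepsilon=-1$ then $f$ is strictly decreasing; in either case $f$ is monotone. I do not anticipate any technical obstacle here: once one unpacks the definitions and uses that $S^0$ is discrete, the argument is essentially a connectedness count. The only subtle point worth double-checking is that this lemma is genuinely \emph{not} subsumed by Theorem~\ref{thm:noline}, precisely because the hypothesis there (image not contained in an affine line) fails for every function $\R\to\R$, which is exactly what makes jump-discontinuous monotone examples possible.
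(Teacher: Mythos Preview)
Your proposal is correct and follows essentially the same argument as the paper: identify the two connected components of $\Conf_2(\R)$, use that $S^0$ is discrete so $\alpha(f)=\delta(\Phi_f)=0$ is equivalent to $\Phi_f$ being constant on each component, and observe that constancy of the sign of $f(x)-f(y)$ on $\{x>y\}$ is exactly strict monotonicity. The paper's proof is terser but the content is the same.
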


\begin{proof}
$\Conf_2(\R)$ has two connected components: $\{(x,y) \in \R^2 \colon~x>y\}$ and $\{(x,y) \in \R^2 \colon~x<y\}$.
Therefore, a $\Z/2$-map $\Phi_f \colon \Conf_2(\R) \to S^0$ is continuous if and only if it sends one entire component to $+1$ and the other to $-1$.
The latter means that the sign of $f(x)-f(y)$ is constant on each of the components, i.e., $f$ is monotone.
\end{proof}

\begin{proof}[Proof of Theorem~\ref{thm:noline}]
As remarked at the beginning of this subsection, if $f$ is continuous, then $\alpha(f)=0$.
We will show that if $f$ is not continuous, then $\alpha(f) > 0$.
Let $x$ be a point of discontinuity of $f$.
Then there exists a sequence $x_n \to x$ such that the collection of images $f(x_n)$ lie outside some $\varepsilon_1$-neighborhood of $f(x)$.
The elements
\[
u_n \coloneqq \frac{f(x)-f(x_n)}{\| f(x) - f(x_n) \|} \in S^{d-1}
\]
lie in a compact space, so there exists a  subsequence (also labeled $x_n$) such that $u_n$ converges to some $u \in S^{d-1}$.
This direction $u$ determines an affine line $\ell = \{f(x) + \lambda u : \lambda \in \mathbb{R}\} \subseteq \R^d$, which we might intuitively consider as the line on which the discontinuity manifests.
Now, by the assumption that $f(X)$ is not contained in an affine line, we may choose some $y\in X$ with $f(y) \notin \ell$, to ``witness" the discontinuity of $f$ at $x$.
See Figure \ref{fig:continuity} for illustration.

If the sequence of points $f(x_n)$ has a bounded subsequence, then it has a subsequence (also denoted by $f(x_n)$) that satisfies $f(x_n) \to z$, for some $z \in \ell$.
Let
\[
    v \coloneqq \frac{z-f(y)}{\|z-f(y)\|} = \lim_{n \to \infty} \frac{f(x_n)-f(y)}{\|f(x_n)-f(y)\|} \in S^{d-1} .
\]
From $\|f(x)-f(x_n)\| \ge \varepsilon_1$ we obtain $\|f(x)-z\| \ge \varepsilon_1$, which implies $v \neq u$.
Therefore there exists $\varepsilon > 0$ such that
\begin{equation} \label{eq:angle}
    \left\langle \frac{f(x)-f(y)}{\| f(x) - f(y) \|}, \frac{f(x_{n})-f(y)}{\| f(x_{n}) - f(y) \|} \right\rangle < 1 - \varepsilon
\end{equation}
for sufficiently large $n$, and $\alpha(f) > \arccos(1 - \varepsilon) > 0$, as desired.

On the other hand, if the sequence $f(x_n)$ does not have a bounded subsequence, the vector
\[
    v = \lim_{n \to \infty} \frac{f(x_n)-f(y)}{\|f(x_n)-f(y)\|} \in S^{d-1}
\]
equals $-u$.
In particular, it again satisfies $v \neq u$, so the claim follows similarly from \eqref{eq:angle}.
\end{proof}

\begin{figure}[h]
    \centering
    \includegraphics[width=0.55\linewidth]{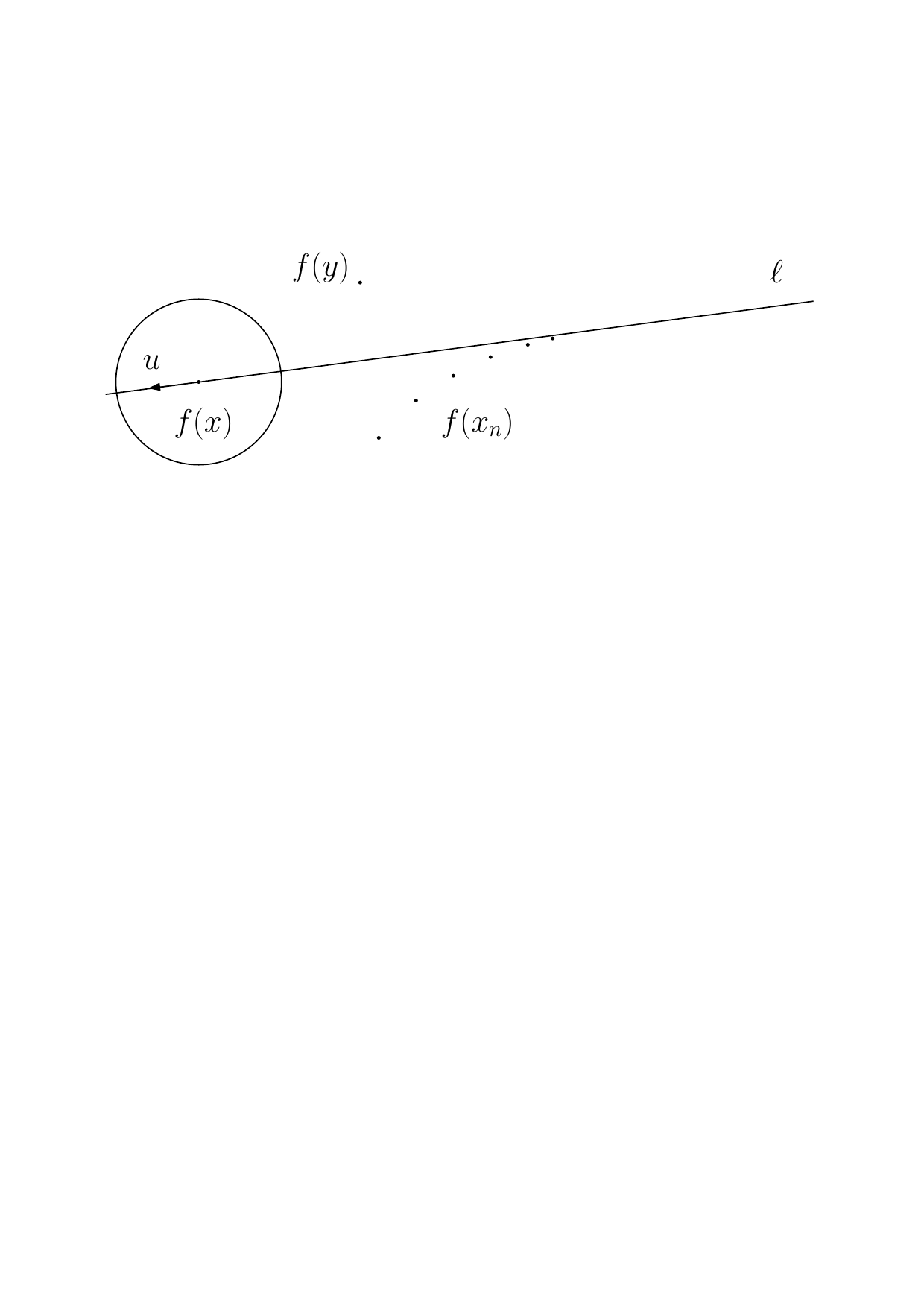}
    \caption{Discontinuity of $f$ at $x$.}
    \label{fig:continuity}
\end{figure}



\subsection{Van Kampen--Flores theorem} \label{subsec:vk-f}
A simplicial complex of dimension $d$ can always be embedded (even affinely) in $\R^{2d+1}$, for example, by placing the vertices of the complex along the moment curve $\R \to \R^{2d+1},~t \mapsto (t, t^2, \dots, t^{2d+1})$.
The result of van Kampen \cite{van1933komplexe} and Flores \cite{flores1933n} states that the $d$-skeleton of the $(2d+2)$-dimensional simplex does not embed in $\R^{2d}$, showcasing that the above general embedding result is the best possible.
In fact, they prove a stronger statement.
For a simplicial complex $K$, we say that a function $f \colon K \to \R^n$ is \emph{almost injective} if $f(x_1) = f(x_2)$ implies that $x_1$ and $x_2$ belong to a common face of~$K$.
This describes a wider class of functions than just injective functions.
A function that is almost injective and is continuous will be called an \emph{almost embedding}.

\begin{theorem}[van Kampen--Flores \cite{flores1933n,van1933komplexe}]
    \label{thm:vk-f}
    Let $d \ge 1$ be an integer.
Then, there does not exists an almost embedding $\sk_d(\Delta_{2d+2}) \to \R^{2d}$.
\end{theorem}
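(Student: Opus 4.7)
The plan is to mirror the strategy used for injective functions in Theorem~\ref{thm:quant-general}, replacing the topological configuration space $\Conf_2(X)$ by a combinatorial analogue tailored to almost embeddings, and then applying a standard $\Z/2$-equivariant obstruction argument.

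First, assume for contradiction that an almost embedding $f \colon K \to \R^{2d}$ exists with $K = \sk_d(\Delta_{2d+2})$, and form the \emph{deleted join} $K^{*2}_\Delta$, consisting of formal convex combinations $(1-t)x + ty$ with $x, y$ in disjoint faces of $K$, equipped with the $\Z/2$-action $(x,y,t) \mapsto (y,x,1-t)$. The almost injectivity hypothesis guarantees $f(x) \ne f(y)$ whenever $x,y$ lie in disjoint faces, so a standard join-of-maps construction produces a continuous $\Z/2$-map $\widehat\Phi_f \colon K^{*2}_\Delta \to S^{2d}$, where $S^{2d}$ is viewed as the equivariant join of $S^{2d-1}$ (the target of the usual normalized-difference test map from the deleted product) with $S^0$.

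The crux of the argument is the combinatorial-topological fact that $\coind_{\Z/2}(K^{*2}_\Delta) \ge 2d+1$, which is the classical content of van Kampen--Flores. One way to establish it: the full deleted join $\Delta_{2d+2}^{*2}_\Delta$ is $\Z/2$-homeomorphic to the boundary of the $(2d+3)$-dimensional cross-polytope, hence to $S^{2d+2}$. Restricting each join factor to faces of size at most $d+1$ removes the ``unbalanced'' pairs (those with one side of size $\ge d+2$), and a symmetric collapsing/discrete-Morse argument identifies the remaining subcomplex with a $\Z/2$-equivariant copy of $S^{2d+1}$ sitting inside $K^{*2}_\Delta$. Equivalently, one may invoke Sarkaria's coloring-index inequality, which yields the same coindex lower bound directly.

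Finally, composing an equivariant map $S^{2d+1} \to K^{*2}_\Delta$ with $\widehat\Phi_f$ yields a continuous $\Z/2$-map $S^{2d+1} \to S^{2d}$, contradicting the Borsuk--Ulam theorem, so no almost embedding exists. The main obstacle is the coindex lower bound on $K^{*2}_\Delta$: this is the true combinatorial core of van Kampen--Flores, while all other steps are formal consequences of the definitions and Borsuk--Ulam.
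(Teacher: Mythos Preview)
Your approach is correct and matches the argument the paper uses (within the proof of Theorem~\ref{thm:quant-vk-f}, specialized to continuous~$f$): build a $\Z/2$-map from the deleted join $K^{*2}_\Delta$ to $S^{2d}$ out of the normalized-difference test map, then use that $K^{*2}_\Delta$ is a $(2d+1)$-sphere (the paper cites Matou\v{s}ek for this, which is exactly the fact you sketch via the cross-polytope description) to contradict Borsuk--Ulam. The only cosmetic difference is that the paper first constructs the map on the deleted product $\Conf_2^\Delta(K)\to S^{2d-1}$ and then extends to the join by the explicit formula $tx+(1-t)y\mapsto (1-2t,\,t(1-t)F(x,y))/\|\cdot\|$, whereas you go straight to the join.
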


As before, a natural question arises: \emph{Since every almost injective function $\sk_d(\Delta_{2d+2}) \to \R^{2d}$ is discontinuous, can we quantify how discontinuous it needs to be}?
To answer this question, we will need to adjust the notion of the scale invariant modulus of discontinuity $\alpha$.
Namely, for a simplicial complex $K$, let
\begin{equation*}
    \Conf_2^{\Delta}(K) \coloneqq \{(x,y) \in K \times K \colon~ x,y~\text{belong to disjoint faces}\}
\end{equation*}
denote the appropriate configuration space, in the literature \cite{matousek2003using} also known as the \emph{2-fold deleted product of $K$}.
We again equip it with a $\Z/2$-action that flips $x$ and $y$.
Then, an almost embedding $f \colon~ K \to \R^d$ induces a $\Z/2$-map
\begin{equation} \label{eq:Pf-vk-f}
    P_f \colon \Conf_2^{\Delta}(K) \longrightarrow S^{2d-1},~(x,y) \longmapsto \frac{f(x)-f(y)}{\|f(x)-f(y)\|}, 
\end{equation}
which is the restriction of the map \eqref{eq:test-inj}.
We now define the \emph{scale invariant modulus of discontinuity of an almost injective function} $f$ as $\alpha^{(2)}(f) \coloneqq \delta(P_f)$.
The reason we have the number two in the superscript of $\alpha$ is that in Section \ref{sec:almost-r-emb} we will study the more general almost $r$-injective functions and define the appropriate modulus of discontinuity $\alpha^{(r)}$.

With the new notion at hand, we can prove the following generalization of Theorem \ref{thm:vk-f}.

\begin{theorem}[Quantified van Kampen--Flores]
    \label{thm:quant-vk-f}
    Let $d \ge 1$ be an integer.
Then, every almost injective function $\sk_d(\Delta_{2d+2}) \to \R^{2d}$ satisfies $\alpha^{(2)}(f) \ge r_{2d-1} = \arccos{\left(\frac{-1}{2d}\right)}$.
\end{theorem}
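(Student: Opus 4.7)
The plan is to follow, essentially verbatim, the strategy used in the proof of Theorem~\ref{thm:quant-general}, with the configuration space $\Conf_2(X)$ replaced by the 2-fold deleted product $\Conf_2^{\Delta}(K)$ for $K=\sk_d(\Delta_{2d+2})$, and with $S^{d-1}$, $r_{d-1}$ replaced by $S^{2d-1}$, $r_{2d-1}$. The crucial input is the classical obstruction behind Theorem~\ref{thm:vk-f}: there is no $\Z/2$-equivariant map $\Conf_2^{\Delta}(\sk_d(\Delta_{2d+2})) \to S^{2d-1}$. This is the standard deleted-product formulation of the van Kampen--Flores obstruction, and it is exactly what (the contrapositive of) the existence of an almost embedding would contradict via the test map \eqref{eq:Pf-vk-f}.

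Given an almost injective $f \colon K \to \R^{2d}$, I would first invoke the straightforward extension of \cite[Lemma~7.4]{GH-BU-VR} to metric thickenings to produce, for each $\varepsilon > 0$, some scale $\rho = \rho(\varepsilon) > 0$ and a continuous $\Z/2$-equivariant map
\[
\vrm{\Conf_2^{\Delta}(K)}{\rho} \longrightarrow \vrm{S^{2d-1}}{\delta(P_f) + \varepsilon}
\]
induced by the test map $P_f$ from \eqref{eq:Pf-vk-f}. Precomposing with the canonical isometric $\Z/2$-inclusion $\Conf_2^{\Delta}(K) \hookrightarrow \vrm{\Conf_2^{\Delta}(K)}{\rho}$ yields a $\Z/2$-map
\[
\Conf_2^{\Delta}(K) \longrightarrow \vrm{S^{2d-1}}{\delta(P_f) + \varepsilon}.
\]

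Assume for contradiction that $\delta(P_f) + \varepsilon < r_{2d-1}$. Then the homotopy equivalence \eqref{eq:P(Sn)-homotopy} supplies a $\Z/2$-map $\vrm{S^{2d-1}}{\delta(P_f)+\varepsilon} \to S^{2d-1}$; composing, we obtain a $\Z/2$-map $\Conf_2^{\Delta}(K) \to S^{2d-1}$, contradicting the van Kampen--Flores obstruction. Hence $\delta(P_f) + \varepsilon \ge r_{2d-1}$, and letting $\varepsilon \to 0$ gives $\alpha^{(2)}(f) = \delta(P_f) \ge r_{2d-1}$.

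The main obstacle is really a conceptual check rather than a technical one: the Lemma~7.4 argument from \cite{GH-BU-VR} was stated for pairs of metric spaces, and one must verify that it transfers cleanly to the $\Z/2$-equivariant setting with the deleted product $\Conf_2^{\Delta}(K)$ as domain (with the metric induced from $K \times K$) and the $\Z/2$-action swapping factors. Since the $\Z/2$-action is by isometries and $P_f$ is $\Z/2$-equivariant, the induced thickened map is automatically $\Z/2$-equivariant, so no real new work is required. Everything else is a direct transcription of the argument for Theorem~\ref{thm:quant-general}.
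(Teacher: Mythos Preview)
Your proposal is correct and follows essentially the same argument as the paper's proof. The only minor difference is that you cite the non-existence of a $\Z/2$-map $\Conf_2^{\Delta}(\sk_d(\Delta_{2d+2})) \to S^{2d-1}$ as a black-box fact, whereas the paper derives it in-line by extending the putative map to the 2-fold deleted join $(\sk_d(\Delta_{2d+2}))^{*2}_\Delta \cong S^{2d+1}$ and invoking Borsuk--Ulam there.
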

\begin{proof}
    Let $f \colon~ \sk_d(\Delta_{2d+2}) \to \R^{2d}$ be almost injective.
Analogously as in the proof of Theorem \ref{thm:quant-general}, the function $P_f$ from \eqref{eq:Pf-vk-f} induces a $\Z/2$-map
    \begin{equation} \label{eq:del-prod-to-sphere}
        \Conf_2^{\Delta}(\sk_d(\Delta_{2d+2})) \longrightarrow \vrm{S^{2d-1}}{\alpha^{(2)}(f) + \varepsilon}.
    \end{equation}
    Let us show that this can only happen if $\alpha^{(2)}(f) + \varepsilon \ge r_{2d-1}$.
    Indeed, if $\alpha^{(2)}(f) + \varepsilon < r_{2d-1}$, the homotopy equivalence \eqref{eq:P(Sn)-homotopy} (and the text thereafter) would yield a $\Z/2$-map 
    \[
        \vrm{S^{2d-1}}{\alpha^{(2)}(f) + \varepsilon} \longrightarrow S^{2d-1}
    \] which when precomposed with \eqref{eq:del-prod-to-sphere} would yield a $\Z/2$-map
    \begin{equation*}
        F \colon \Conf_2^{\Delta}(\sk_d(\Delta_{2d+2})) \longrightarrow S^{2d-1}.
    \end{equation*}
    Let us introduce the \emph{2-fold deleted join} (see \cite[Section~5.5]{matousek2003using} for more detail) as a simplicial complex
    \begin{equation*}
        (\sk_d(\Delta_{2d+2}))^{*2}_{\Delta} \coloneqq \{tx + (1-t)y \in (\sk_d(\Delta_{2d+2}))^{*2} \colon~ x,y~\text{belong to disjoint faces}\}.
    \end{equation*}
    The deleted product can be seen as a subspace of the deleted join via the inclusion
    \begin{equation*}
        \Conf_2^{\Delta}(\sk_d(\Delta_{2d+2})) \longrightarrow (\sk_d(\Delta_{2d+2}))^{*2}_{\Delta},~(x,y) \longmapsto \tfrac{1}{2}x + \tfrac{1}{2}y.
    \end{equation*}
    The map $F$ can be extended to a $\Z/2$-map
    \begin{equation*}
        (\sk_d(\Delta_{2d+2}))^*_{\Delta} \longrightarrow S^{2d},\quad tx + (1-t)y \longmapsto \frac{(1-2t,t(1-t)F(x,y))}{\|(1-2t,t(1-t)F(x,y))\|}.
    \end{equation*}
    However, this contradicts \cite[Section~5.6, pg.~117]{matousek2003using} which says that $(\sk_d(\Delta_{2d+2}))^*_{\Delta}$ is a $(2d+1)$-sphere.
\end{proof}

We remark that we cannot improve the lower bound on the scale parameter for the non-existence of a $\Z/2$-map \eqref{eq:del-prod-to-sphere} since a $\Z/2$-map 
    \begin{equation*}
        \Conf_2^{\Delta}(\sk_d(\Delta_{2d+2})) \longrightarrow \vrm{S^{2d-1}}{r_{2d-1}}
    \end{equation*} exists.
Namely, the domain is a $(2d)$-dimensional cellular complex with a free $\Z/2$-action and by \eqref{eq:P(Sn)-homotopy} the codomain is $(2d-1)$-connected.
Hence, all obstructions to defining such a $\Z/2$-map vanish (see also \cite[Lemma~6.2.2]{matousek2003using}).


\section{Almost \texorpdfstring{$r$}{r}-embeddings}
\label{sec:almost-r-emb}

In this section we will prove a quantified topological Tverberg's theorem \cite{BSSz1981,ozaydin1987}.
We will first introduce the appropriate language in order to state it as a nonembeddability-type result \cite{avvakumov2021eliminating}.

\begin{definition}
    Let $r \ge 2$, $d \ge 0$ be integers and $K$ a simplicial complex.
A function $f\colon K \to \R^d$ is \emph{almost $r$-injective} if any $r$ pairwise disjoint faces $\sigma_1,\dots ,\sigma_r \subseteq \Delta_N$ satisfy $f(\sigma_1) \cap \dots \cap f(\sigma_r) = \emptyset$.
\end{definition}

In another words, an almost $r$-injective function $f$ never identifies $r$ points from pairwise disjoint faces.
This represents the $r$-fold generalization of the notion of the almost injectivity ($r=2$) from the previous section.
Tverberg \cite{tverberg1966} proved that for $r \ge 2$ and $d \ge 0$ any \emph{affine} function $f\colon \Delta_N \to \R^d$ is not almost $r$-injective if the dimension of the simplex is large enough, namely if $N \ge (r-1)(d+1)$.
B\'ar\'any, Shlosman and Sz\H ucs \cite{BSSz1981} and \"Ozaydin \cite{ozaydin1987} showed that no such \emph{continuous} almost $r$-injective function exists if $r$ is a prime power.

\begin{theorem}[Topological Tverberg, prime power case {\cite{BSSz1981, ozaydin1987}}] \label{theorem: topological tverberg}
    Let $d \ge 1$ be an integer, let $r \ge 2$ be a prime power, and let $N = (r-1)(d+1)$.
Let $f\colon \Delta_N \to \R^d$ be any function.
Then, if $f$ is a $r$-injective, it is not continuous.
\end{theorem}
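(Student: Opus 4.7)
The plan is to argue by contradiction using the configuration space / test map paradigm. Suppose $f\colon \Delta_N \to \R^d$ with $N = (r-1)(d+1)$ is continuous and almost $r$-injective. I would work with the \emph{$r$-fold deleted join}
\[
(\Delta_N)^{*r}_\Delta = \{t_1 x_1 + \dots + t_r x_r \in (\Delta_N)^{*r} : x_1,\dots,x_r \text{ lie in pairwise disjoint faces of } \Delta_N\},
\]
an $N$-dimensional simplicial complex on which the symmetric group $\mathfrak{S}_r$ acts by permuting the join coordinates. Combinatorially it is the $(N+1)$-fold join of the discrete $r$-point set, and an iterated join computation shows it is $(N-1)$-connected.

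\textbf{The test map to a representation sphere.} Let $W_r = \{v \in \R^r : \sum_i v_i = 0\}$ be the reduced permutation representation of $\mathfrak{S}_r$. Continuity together with almost $r$-injectivity of $f$ produce a continuous $\mathfrak{S}_r$-equivariant map
\[
F\colon (\Delta_N)^{*r}_\Delta \longrightarrow W_r^{\oplus(d+1)},\qquad t_1 x_1 + \dots + t_r x_r \longmapsto \bigl((t_i - \tfrac{1}{r})_i,\ (t_i f(x_i) - c)_i\bigr),
\]
where $c = \tfrac{1}{r}\sum_j t_j f(x_j)$. Vanishing of $F$ at some point would force $t_i = 1/r$ for all $i$ from the first component, and hence $f(x_1) = \dots = f(x_r)$ from the second, contradicting almost $r$-injectivity. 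Thus $F$ avoids the origin, and normalization yields a continuous $\mathfrak{S}_r$-equivariant map $\widetilde F\colon (\Delta_N)^{*r}_\Delta \to S\bigl(W_r^{\oplus(d+1)}\bigr)$ from an $N$-dimensional complex to an $(N-1)$-sphere.

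\textbf{Equivariant nonexistence and the main obstacle.} Writing $r = p^k$, I would restrict equivariance to the Sylow subgroup $(\Z/p)^k \subseteq \mathfrak{S}_r$ acting on the $r$ coordinates via the regular representation, and invoke \"Ozaydin's theorem that no such $(\Z/p)^k$-equivariant map $(\Delta_N)^{*r}_\Delta \to S(W_r^{\oplus(d+1)})$ exists when $N = (r-1)(d+1)$. In the case $r = p$ prime this is a clean coindex comparison: the deleted join is $(N-1)$-connected with a free $\Z/p$-action, so has $\Z/p$-coindex at least $N$, whereas the sphere $S(W_p^{\oplus(d+1)})$ carries a free $\Z/p$-action and has coindex exactly $N-1$. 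The real difficulty --- and what I expect to be the main obstacle --- is the case $k \ge 2$, where $(\Z/p)^k$ fails to act freely on $W_r$ because every nontrivial proper subgroup fixes the vectors constant on its orbits, so the coindex argument breaks. In its place one must show nonvanishing of the equivariant primary obstruction class in the Bredon cohomology group $H^N_{(\Z/p)^k}\bigl((\Delta_N)^{*r}_\Delta;\pi_{N-1}(S(W_r^{\oplus(d+1)}))\bigr)$ via a transfer/localization computation, using that the Euler class of $W_r$ restricts nontrivially to every maximal cyclic subgroup of $(\Z/p)^k$. That the prime-power hypothesis is essential is confirmed by the composite-$r$ counterexamples of Frick~\cite{frick2015counterexamples} and Avvakumov--Mabillard--Skopenkov--Wagner~\cite{avvakumov2021eliminating}.
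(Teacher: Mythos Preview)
The paper does not contain a proof of this theorem; it is stated as a cited background result from \cite{BSSz1981, ozaydin1987}, and the paper's contribution is the quantified version (Theorem~\ref{theorem: discontinuous tverberg}). So there is no ``paper's own proof'' to compare against. Your sketch is the standard configuration-space/test-map argument from the literature and is correct in outline: the deleted join $(\Delta_N)^{*r}_\Delta \cong [r]^{*(N+1)}$ is $(N-1)$-connected, the test map you wrote down is $\mathfrak{S}_r$-equivariant and avoids $0$ exactly when $f$ is almost $r$-injective, and the contradiction comes from the nonexistence of an equivariant map to $S(W_r^{\oplus(d+1)})$.

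One remark on packaging: the paper works throughout with the deleted \emph{product} $\Conf_r^\Delta(\Delta_N)$ and the smaller target $S(W_r^{\oplus d})$, quoting the nonexistence of an $\mathfrak{S}_r$-map $\Conf_r^\Delta(\Delta_N)\to S(W_r^{\oplus d})$ as Proposition~\ref{proposition: nonexistence of equivariant map for tverberg} (again cited, not proved). Your deleted-join formulation with target $S(W_r^{\oplus(d+1)})$ is equivalent and is in fact the version most convenient for the connectivity/coindex argument you sketch in the prime case. You are also right that the prime-power case $r=p^k$ with $k\ge 2$ is where the genuine work lies: the $(\Z/p)^k$-action on $S(W_r^{\oplus(d+1)})$ is not free, so one cannot simply compare coindices, and one must instead compute the equivariant Euler class (\"Ozaydin's localization argument). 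Your description of this obstacle is accurate, though of course you are ultimately citing \"Ozaydin rather than carrying out that computation --- which is exactly what the paper does too.
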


The prime power condition is really necessary: Frick \cite{frick2015counterexamples} and Blagojevi\'c, Frick and Ziegler~\cite{blagojevic2015barycenters}, building on work of Mabillard and Wagner \cite{mabillard2014eliminating}, showed that if $r$ is not a prime power and $d \ge 3r$, then a continuous almost $r$-embedding $f \colon~ \Delta_N \to \R^d$ exists.
The lower bound on $d$ was improved to $d \ge 2r+1$ by Avvakumov, Mabillard, Skopenkov and Wagner \cite{avvakumov2021eliminating}.
In particular, when $r$ is a prime power, a natural question of quantifying discontinuity of almost $r$-injective functions emerges.

In order to answer it, in Definition \ref{def:alpha^(r)} below, we introduce a notion $\alpha^{(r)}$ that captures discontinuity of such maps.
It is an $r$-fold analogue of scale-invariant moduli of discontinuity $\alpha$ and $\alpha^{(2)}$ introduced in Section \ref{sec:injectivity}.
The following is the main theorem of the section.

\begin{theorem} [Quantified topological Tverberg] \label{theorem: discontinuous tverberg}
    Let $d \ge 1$ be an integer, $r$ a prime power, and $f\colon \Delta_{(r-1)(d+1)} \to \R^d$ any function.
Then, if $f$ is almost $r$-injective it must satisfy
    \[
    	\alpha^{(r)}(f) ~\ge~  \arccos(\tfrac{-1}{d(r-1)}).
    \]
\end{theorem}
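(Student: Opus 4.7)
The plan is to mirror the proof of Theorem~\ref{thm:quant-vk-f}, replacing $\Z/2$-equivariance by $\Sigma_r$-equivariance and the van Kampen--Flores obstruction by the topological Tverberg theorem for prime-power $r$. Let $W_r = \{v \in \R^r \colon \sum_i v_i = 0\}$ be the standard $(r-1)$-dimensional representation of $\Sigma_r$, so $(\R^d)^r$ decomposes equivariantly as $\R^d \oplus W_r^d$. An almost $r$-injective $f\colon \Delta_N \to \R^d$ with $N = (r-1)(d+1)$ then induces a $\Sigma_r$-equivariant test map
\[
    P_f^{(r)}\colon \Conf_r^{\Delta}(\Delta_N) \longrightarrow S(W_r^d) \cong S^{d(r-1)-1}, \quad (x_1, \dots, x_r) \longmapsto \frac{\bigl(f(x_i) - \bar f\bigr)_{i=1}^{r}}{\bigl\|\bigl(f(x_i) - \bar f\bigr)_{i=1}^{r}\bigr\|},
\]
with $\bar f = \tfrac{1}{r}\sum_i f(x_i)$; the denominator never vanishes precisely because $f$ is almost $r$-injective. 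The natural definition of $\alpha^{(r)}(f)$ is $\delta(P_f^{(r)})$, the $r$-fold analogue of $\alpha^{(2)}$ from Section~\ref{subsec:vk-f}.

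I next apply the thickening argument from the proofs of Theorems~\ref{thm:quant-general} and~\ref{thm:quant-vk-f}, which extends verbatim from $\Z/2$ to any finite group acting by isometries on the codomain, as noted after~\eqref{eq:vr->vrm-weak-equiv}. This promotes $P_f^{(r)}$ to a continuous $\Sigma_r$-equivariant map
\[
    \Conf_r^{\Delta}(\Delta_N) \longrightarrow \vrm{S^{d(r-1)-1}}{\alpha^{(r)}(f) + \varepsilon}
\]
for arbitrary $\varepsilon > 0$. Suppose for contradiction that $\alpha^{(r)}(f) + \varepsilon < r_{d(r-1)-1} = \arccos\bigl(\tfrac{-1}{d(r-1)}\bigr)$. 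Since $\Sigma_r$ acts orthogonally on $S^{d(r-1)-1} \subset W_r^d$, the ambient-Euclidean-barycenter-and-normalize retraction realizing~\eqref{eq:P(Sn)-homotopy} is automatically $\Sigma_r$-equivariant, yielding a $\Sigma_r$-map $G\colon \Conf_r^{\Delta}(\Delta_N) \to S^{d(r-1)-1}$.

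Finally, I extend $G$ via the standard deleted-join construction (as in the proof of Theorem~\ref{thm:quant-vk-f}) to a $\Sigma_r$-equivariant map $\widetilde G \colon (\Delta_N)^{*r}_{\Delta} \to S(W_r \oplus W_r^d) \cong S^{N-1}$ whose $W_r$ component is the barycentric offset $(t_i - \tfrac{1}{r})_{i=1}^{r}$ and whose $W_r^d$ component is $\bigl(\prod_j t_j\bigr) G(x_1, \dots, x_r)$, all normalized. The symmetric weight $\prod_j t_j$ vanishes on the strata where some $t_i = 0$ (which is precisely where $G$ is potentially undefined), while the $W_r$ component is nonzero at those strata, so $\widetilde G$ is well-defined and continuous. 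For prime-power $r$, no such $\Sigma_r$-equivariant map exists -- indeed, Özaydin shows nonexistence already at the level of the transitive elementary abelian subgroup $(\Z/p)^k \subset \Sigma_r$, which implies nonexistence for $\Sigma_r$ as well -- contradicting the Bárány--Shlosman--Szűcs/Özaydin form of the topological Tverberg theorem. Letting $\varepsilon \to 0$ yields the claimed bound.

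The main technical point that requires care is the equivariant upgrade of~\eqref{eq:P(Sn)-homotopy}: the quoted result is stated for the $\Z/2$-antipodal action, and one must verify that the Adamaszek--Adams--Frick retraction $\vrm{S^n}{r} \to S^n$ for $r < r_n$ commutes with the full $\Sigma_r$-action (and hence with the subgroup $(\Z/p)^k$ needed to invoke Özaydin). This should be automatic because the retraction is built from Euclidean barycenters followed by radial normalization, both of which commute with any isometric group action; but it deserves an explicit statement, alongside a continuity check on the join-extension formula at its degenerate boundary strata.
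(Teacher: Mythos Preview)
Your proposal is correct and follows essentially the same approach as the paper: thicken $P_f^{(r)}$ to a continuous $\mathfrak{S}_r$-map into $\vrm{S(W_r^{\oplus d})}{\alpha^{(r)}(f)+\varepsilon}$, then use the $\mathfrak{S}_r$-equivariant retraction to $S(W_r^{\oplus d})$ for scales below $r_{d(r-1)-1}$ (the paper cites \cite[Proposition~5.3]{AAF} for this, which handles exactly the equivariance concern you flag). The only difference is that your final step---extending $G$ to the deleted join via $(t_i-\tfrac1r)_i$ and $(\prod_j t_j)G$---is unnecessary: the paper invokes the deleted-\emph{product} obstruction directly (Proposition~\ref{proposition: nonexistence of equivariant map for tverberg}), so once you have a $\mathfrak{S}_r$-map $\Conf_r^{\Delta}(\Delta_N)\to S(W_r^{\oplus d})$ you already have a contradiction without passing to the join.
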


The constant $\arccos(\tfrac{-1}{d(r-1)})$ denotes the diameter of the regular $d(r-1)$-dimensional simplex inscribed in the $(d(r-1)-1)$-sphere equipped with the geodesic metric.
As explained in Remark \ref{remark: R_delta and angles}, the fact that $\alpha^{(r)}(f) > 0$ implies that $f$ is not continuous.
Therefore, Theorem \ref{theorem: discontinuous tverberg} presents a generalization of the topological Tverberg theorem.

\begin{definition} \label{def:r-del-prod}
	For an integers $r \ge 2$ and $N \ge r-1$ we define a cellular complex
	\[
        \Conf_r^{\Delta}(\Delta_N) := \{(x_1,...,x_r) \in  (\Delta_N)^{\times r}: x_i\text{'s belong to pairwise disjoint faces}\},
	\]
    called the \textit{$r$-fold deleted product} of $\Delta_N$.
\end{definition}

The symmetric group $\mathfrak{S}_r$ acts freely on $\Conf_r^{\Delta}(\Delta_N)$ by permuting the coordinates.
Let us denote by
\(
	W_r^{\oplus d} := \{(z_1,...,z_r) \in (\R^d)^{\oplus r}: z_1 + \dots + z_r = 0\}
\)
a $d(r-1)$-dimensional $\mathfrak{S}_r$-representation, where the action is also given by permutation of coordinates.
Next, we define an $\mathfrak{S}_r$-map
\begin{equation*} \label{eq:map Pf}
	\Conf_r^{\Delta}(f) \colon \Conf_r^{\Delta}(\Delta_N) \longrightarrow W_r^{\oplus d}
\end{equation*}
which sends a tuple $(x_1,...,x_r) \in \Conf_r^{\Delta}(\Delta_N)$ to a tuple obtained from 
\(
	(f(x_1), \dots , f(x_r)) \in (\R^d)^{\oplus r}
\)
by subtracting the average $\frac{1}{r}(f(x_1) + \dots + f(x_r))$ from each of the $r$ coordinates $f(x_i)$.
We note that $f$ is almost $r$-injective if and only if the image of $\Conf_r^{\Delta}(f)$ does not contain the origin $0 \in W_r^{\oplus d}$, which enables the following definition.

\begin{definition} \label{def:alpha^(r)}
	Let $f\colon \Delta_N \to \R^d$ be an almost $r$-embedding.
Then we define
	\[
		\alpha^{(r)}(f) := \delta\Big(\Conf_r^{\Delta}(\Delta_N) ~ \xrightarrow{~\Conf_r^{\Delta}(f)~}~ W_r^{\oplus d} \setminus \{0\} ~\xrightarrow{~\nu~} ~S(W_r^{\oplus d})\Big),
	\]
	where $\nu\colon v \mapsto v/\|v\|$ is a deformation retraction and the unit sphere $S(W_r^{\oplus d}) \subseteq W_r^{\oplus d}$ is assumed to be endowed with the geodesic metric.
\end{definition}

\begin{remark} \label{remark: R_delta and angles}
    Let $f\colon \Delta_N \to \R^d$ be an almost $r$-embedding.
The value $\alpha^{(r)}(f)$ can be geometrically interpreted as follows.
    We have 
\[
	\Conf_r^{\Delta}(f)(x_1,...,x_r) = \left(f(x_1) - \tfrac{1}{r}\bigl(f(x_1) + \dots + f(x_r)\bigr), \dots , f(x_r) - \tfrac{1}{r}\bigl(f(x_1) + \dots + f(x_r)\bigr)\right),
\]
as depicted in Figure \ref{fig:r-tuple}.
Then, there are arbitrarily close $r$-tuples $x, y \in \Conf_r^{\Delta}(\Delta_N)$ with the property that the angle between the vectors $\Conf_r^{\Delta}(f)(x),~ \Conf_r^{\Delta}(f)(y) \in W_r^{\oplus d}$
is at least $\alpha^{(r)}(f)$.
In other words, the notion $\alpha^{(r)}(f)$ captures how large angles between values of $\Conf_r^{\Delta}(f)$ at two close points in the $r$-fold deleted product can be.
If $f$ is moreover continuous, then so is $\Conf_r^{\Delta}(f)$, and therefore $\alpha^{(r)}(f) = 0$.
So, in particular, having $\alpha^{(r)}(f)>0$ ensures that $f$ is discontinuous.

\begin{figure}[h]
    \centering
    \includegraphics[width=0.52\linewidth]{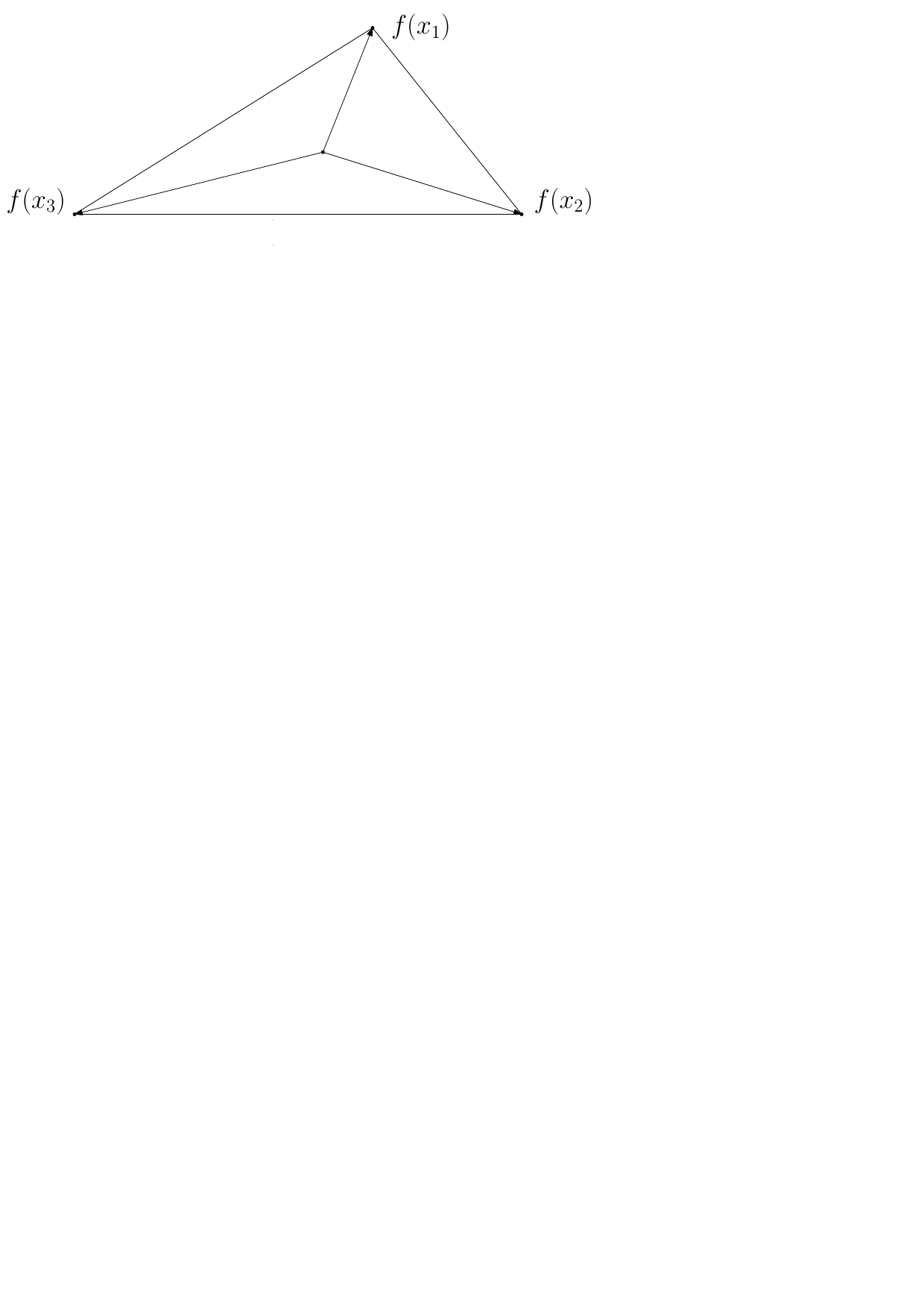}
    \caption{The three vectors in the triple $\Conf_3^{\Delta}(f)(x_1,x_2,x_3)$ for $r=3$ and $d=2$.}
    \label{fig:r-tuple}
\end{figure}
\end{remark}

We have the following technical lemma used to prove topological Tverberg theorem.

\begin{proposition}[{\cite{BSSz1981,ozaydin1987}}] \label{proposition: nonexistence of equivariant map for tverberg}
    Let $d \ge 1$ and $r \ge 2$ be integers and $N = (r-1)(d+1)$.
Then, there exists an equivariant map
    \begin{equation*}
        \Conf_r^{\Delta}(\Delta_N) \longrightarrow_{\mathfrak{S}_r} S(W_r^{\oplus d})
    \end{equation*}
    if and only if $r$ is a prime power.
\end{proposition}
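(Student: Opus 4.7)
The proposition has two directions: a nonexistence of $\mathfrak{S}_r$-equivariant maps when $r$ is a prime power, and a construction of such a map when $r$ is not. These two parts require genuinely different techniques, and I would attack them separately.

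First, suppose $r = p^k$ is a prime power. The plan is to rule out any equivariant map by restricting to a free $p$-subgroup action and invoking equivariant cohomology. I would fix an elementary abelian $p$-subgroup $G = (\Z/p)^k \hookrightarrow \mathfrak{S}_{p^k}$ acting on $[r]$ via its (free) regular representation, so that $G$ acts freely on both $\Conf_r^{\Delta}(\Delta_N)$ and $S(W_r^{\oplus d})$, and any $\mathfrak{S}_r$-equivariant map restricts to a $G$-equivariant one. Restricted to $G$, the representation $W_r^{\oplus d}$ is $d$ copies of the augmentation representation $\widetilde{\R[G]}$. At $N = (r-1)(d+1)$ the deleted product $\Conf_r^{\Delta}(\Delta_N)$ is exactly $d(r-1)$-dimensional (top cells correspond to partitions of $[N+1]$ into $r$ nonempty blocks), matching the real dimension of $W_r^{\oplus d}$, so the primary obstruction is the only one that matters. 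I would identify this obstruction with the $G$-equivariant Euler class of $W_r^{\oplus d}$; in $H^*(BG;\mathbb{F}_p)$ this is a product of nontrivial characters raised to the $d$-th power, and is nonzero, ruling out any $G$-equivariant map.

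Second, suppose $r$ has at least two prime factors. Since $\Conf_r^{\Delta}(\Delta_N)$ is a free $\mathfrak{S}_r$-CW complex of dimension $d(r-1)$ and $S(W_r^{\oplus d})$ is $(d(r-1)-2)$-connected, standard equivariant obstruction theory reduces the construction of a $\mathfrak{S}_r$-equivariant map to the vanishing of the primary obstruction $o$ in degree $d(r-1)$. Following \"Ozaydin, I would argue one prime at a time: for each prime $p \mid r$, a Sylow $p$-subgroup $P_p \le \mathfrak{S}_r$ has order a power of $p$, which cannot be divisible by $r$ since $r$ has another prime factor, so $P_p$ does not act transitively on $[r]$. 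A proper $P_p$-invariant partition $[r] = A \sqcup B$ then yields an explicit (in fact constant) continuous $P_p$-equivariant map $\Conf_r^{\Delta}(\Delta_N) \to S(W_r^{\oplus d})$: send every tuple to the normalised vector with coordinate $|B|$ in positions of $A$ and $-|A|$ in positions of $B$ (times a fixed direction in $\R^d$). Hence $\operatorname{res}^{\mathfrak{S}_r}_{P_p}(o) = 0$, and the restriction-transfer composition gives $[\mathfrak{S}_r : P_p]\cdot o = 0$ for each $p \mid r$; since $r$ is not a prime power, these indices are collectively coprime, forcing $o = 0$.

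The main technical obstacle is the prime-power case: one must carefully identify the primary obstruction with the equivariant Euler class of $W_r^{\oplus d}$ and verify its nonvanishing in $H^*(BG;\mathbb{F}_p)$, which is the heart of the B\'ar\'any--Shlosman--Sz\H ucs argument. The non-prime-power case is conceptually subtle but technically cleaner once the Sylow non-transitivity observation is in hand; the remaining care is to track the orientation-twisted coefficient system in the transfer identity so that the coprime-indices argument really forces $o = 0$.
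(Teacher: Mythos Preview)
The paper does not supply its own proof of this proposition; it is quoted as a known result from the cited references and used as a black box in the proof of Lemma~\ref{lem: c>0 tverberg}. Your sketch is essentially a summary of the original arguments (B\'ar\'any--Shlosman--Sz\H{u}cs for prime $r$, \"Ozaydin for the general prime-power case and for the converse), so there is no in-paper proof to compare against.

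That said, there is a genuine slip in your non-prime-power direction. You restrict attention to primes $p \mid r$ and then claim the indices $[\mathfrak{S}_r : P_p]$ are collectively coprime. This fails already for $r = 6$: the indices for $p = 2$ and $p = 3$ are $45$ and $80$, with $\gcd(45,80) = 5$, so from these alone you only get $5 \cdot o = 0$. The correct version of \"Ozaydin's argument runs over \emph{all} primes $p \mid r!$. For any such $p$, if $r$ is not a power of $p$ (which is every prime once $r$ is not a prime power) then a Sylow $p$-subgroup of $\mathfrak{S}_r$ cannot act transitively on $[r]$, so your constant $P_p$-equivariant map and the transfer identity still give $[\mathfrak{S}_r : P_p] \cdot o = 0$; and now the full collection $\{[\mathfrak{S}_r : P_p] : p \mid r!\}$ does have $\gcd$ equal to $1$, forcing $o = 0$.
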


We prove the following quantified version of Proposition \ref{proposition: nonexistence of equivariant map for tverberg}.

\begin{lemma} \label{lem: c>0 tverberg}
	Let $d \ge 1$ be an integer, let $r$ be a prime power, and let $N = (r-1)(d+1)$.
Then
	\[
		\inf \big\{\rho:~\exists \text{ continuous } \Conf_r^{\Delta}(\Delta_N) \longrightarrow_{\mathfrak{S}_r} \vrm{S(W_r^{\oplus d})}{\rho}\big\} ~=~ r_{d(r-1)-1}  \coloneqq \arccos(\tfrac{-1}{d(r-1)})
	\]
    is the diameter of a $d(r-1)$-dimensional regular simplex inscribed in the sphere $S(W_r^{\oplus d})$, equipped with the geodesic metric.
If $r$ is not a prime power, the infimum is zero.
\end{lemma}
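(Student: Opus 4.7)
The plan is to prove the two inequalities separately, handling the non-prime-power case as a direct corollary of Proposition~\ref{proposition: nonexistence of equivariant map for tverberg}.

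First, for the lower bound in the prime power case, I would argue by contradiction: suppose an $\mathfrak{S}_r$-equivariant map $g \colon \Conf_r^\Delta(\Delta_N) \to \vrm{S(W_r^{\oplus d})}{\rho}$ exists for some $\rho < r_{d(r-1)-1}$. The key tool is the barycenter--normalization retraction $\mu \mapsto \barycenter(\mu)/\|\barycenter(\mu)\|$, which gives a continuous $\mathfrak{S}_r$-equivariant map $\vrm{S(W_r^{\oplus d})}{\rho} \to S(W_r^{\oplus d})$. This is well-defined because any finite subset of $S(W_r^{\oplus d})$ of diameter strictly less than $r_{d(r-1)-1}$ cannot contain the origin in its convex hull (a direct Cauchy--Schwarz argument, reducing via Carath\'eodory to at most $d(r-1)+1$ points), and it is $\mathfrak{S}_r$-equivariant because the permutation action on $W_r^{\oplus d}$ is orthogonal. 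Composing with $g$ yields an $\mathfrak{S}_r$-map $\Conf_r^\Delta(\Delta_N) \to S(W_r^{\oplus d})$, contradicting Proposition~\ref{proposition: nonexistence of equivariant map for tverberg} in the prime power case. Conversely, when $r$ is not a prime power, the same proposition supplies an $\mathfrak{S}_r$-map $\Conf_r^\Delta(\Delta_N) \to S(W_r^{\oplus d})$, and composing with the equivariant inclusion as Dirac measures $S(W_r^{\oplus d}) \hookrightarrow \vrm{S(W_r^{\oplus d})}{\rho}$ produces an equivariant map for every $\rho > 0$, so the infimum is zero.

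For the upper bound in the prime power case, I would construct the equivariant map at the exact critical radius by equivariant obstruction theory. A short cell count shows that $\Conf_r^\Delta(\Delta_N)$ is a free $\mathfrak{S}_r$-CW complex of dimension $(N+1)-r = d(r-1)$: its top-dimensional open cells correspond to ordered partitions of the vertex set of $\Delta_N$ into $r$ non-empty blocks, and freeness holds because coordinates lying in pairwise disjoint faces cannot coincide. On the target side, \eqref{eq:P(Sn)-homotopy} shows that $\vrm{S^{d(r-1)-1}}{r_{d(r-1)-1}}$ is homotopy equivalent to the join $S^{d(r-1)-1} * (\so(d(r-1))/A_{d(r-1)+1})$, which is $(d(r-1)-1)$-connected---one higher than the underlying sphere. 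Equivariant obstruction theory for the free $\mathfrak{S}_r$-action on the domain then shows that the successive obstructions to extending a partial equivariant map live in cohomology groups with coefficients in $\pi_{k-1}$ of the target for $k \leq d(r-1)$, all of which vanish; compare the analogous argument for $r=2$ in the remark following Theorem~\ref{thm:quant-vk-f}, and see \cite[Lemma~6.2.2]{matousek2003using}.

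The hard part is the connectivity boost provided by \eqref{eq:P(Sn)-homotopy} at the critical radius: without the join factor $\so(d(r-1))/A_{d(r-1)+1}$, the target would only be $(d(r-1)-2)$-connected and the final obstruction in degree $d(r-1)$ would be precisely the nontrivial class detected by Proposition~\ref{proposition: nonexistence of equivariant map for tverberg}. Once this connectivity improvement, together with the orthogonality-based equivariance of the barycenter retraction, is in place, both inequalities follow by the recipe above.
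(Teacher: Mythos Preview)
Your proposal is correct and follows essentially the same route as the paper: for the lower bound you compose with an $\mathfrak{S}_r$-equivariant retraction $\vrm{S(W_r^{\oplus d})}{\rho}\to S(W_r^{\oplus d})$ and invoke Proposition~\ref{proposition: nonexistence of equivariant map for tverberg}, and for the upper bound you use equivariant obstruction theory, matching the freeness and dimension $d(r-1)$ of the domain against the extra connectivity of the thickening at the critical radius given by~\eqref{eq:P(Sn)-homotopy}. The only cosmetic differences are that the paper cites \cite[Proposition~5.3]{AAF} for the retraction rather than spelling out the barycenter map, and that the target is in fact $d(r-1)$-connected (since $\so(d(r-1))/A_{d(r-1)+1}$ is path-connected), not merely $(d(r-1)-1)$-connected as you state---but your weaker claim already suffices for the obstruction argument.
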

\begin{proof}
    Throughout the proof we set $r_{d(r-1)-1} \coloneqq \arccos(\tfrac{-1}{d(r-1)})$ to ease the notation and assume $r$ is a prime power.
	Moreover, assume that there exists an equivariant map 
    \[
    	\Conf_r^{\Delta}(\Delta_N) ~\longrightarrow_{\mathfrak{S}_r}~ \vrm{S(W_r^{\oplus d})}{\rho}
    \]
    for some $0 < \rho < r_{d(r-1)-1}$.
From \cite[Proposition 5.3]{AAF} it follows that there exists an equivariant homotopy equivalence
    \[
    	\vrm{S(W_r^{\oplus d})}{\rho} ~\xrightarrow{~~\simeq~~}_{\mathfrak{S}_r}~ S(W_r^{\oplus d}),
    \]
    so the equivariant composition
    \[
    	\Conf_r^{\Delta}(\Delta_N) ~\longrightarrow_{\mathfrak{S}_r}~ \vrm{S(W_r^{\oplus d})}{\rho} ~\longrightarrow_{\mathfrak{S}_r}~ S(W_r^{\oplus d})
    \]
    is continuous, and hence contradicts Proposition~\ref{proposition: nonexistence of equivariant map for tverberg}.
Thus, the infimum from the statement of the lemma is at least $r_{d(r-1)-1}$.
\end{proof}

On the other hand, let us show that an equivariant map
    \begin{equation*}
        \Conf_r^{\Delta}(\Delta_N) ~\longrightarrow_{\mathfrak{S}_r}~ \vr{S(W_r^{\oplus d})}{r_{d(r-1)-1}}
    \end{equation*}
    exists.
The homotopy type of the codomain, obtained in \cite[Theorem 5.4]{AAF}, is
    \[
        \vrm{S(W_r^{\oplus d})}{r_{d(r-1)-1}} \simeq S(W_r^{\oplus d}) * \tfrac{\so((r-1)(d+1))}{A_{(r-1)(d+1)+1}}.
    \]
    In particular, it is $(r-1)d$-connected, so the equivariant map exists because there are no obstructions to defining it since the domain $\Conf_r^{\Delta}(\Delta_N)$ is of dimension $(r-1)d$.

Finally, we may prove the main theorem of this section.

\begin{proof}[Proof of Theorem \ref{theorem: discontinuous tverberg}]
	Assume $f$ is almost $r$-injective.
Then, as discussed above, the map
	\[
		\Conf_r^{\Delta}(\Delta_N) \xrightarrow{\Conf_r^{\Delta}(f)}_{\mathfrak{S}_r} W_r^{\oplus d} \setminus \{0\} \xrightarrow{~\nu~}_{\mathfrak{S}_r} S(W_r^{\oplus d})
	\]
	is well-defined.
Since $\Conf_r^{\Delta}(\Delta_N)$ is compact, a straightforward generalization of \cite[Lemma~7.4]{GH-BU-VR} to metric thickenings implies that for any $\varepsilon > 0$ there exists a parameter $\rho = \rho(\varepsilon)$ such that the induced function 
    \begin{equation*}
        \vrm{\Conf_r^{\Delta}(\Delta_N)}{\rho} \longrightarrow_{\mathfrak{S}_r} \vrm{S(W_r^{\oplus d})}{\alpha^{(r)}(f) + \varepsilon}
    \end{equation*} 
    is continuous and $\mathfrak{S}_r$-equivariant.
An $\mathfrak{S}_r$-equivariant inclusion $\Conf_r^{\Delta}(\Delta_N) \hookrightarrow \vrm{\Conf_r^{\Delta}(\Delta_N)}{\rho}$ and Lemma \ref{lem: c>0 tverberg} imply $\alpha^{(r)}(f) + \varepsilon ~\ge~ r_{d(r-1)-1}$, so the result follows by letting $\varepsilon \to 0$.
\end{proof}

\subsection{Relating $\alpha^{(r)}$ to the modulus of discontinuity}

We now relate $\alpha^{(r)}(f)$ to more concrete quantities $\delta(f)$ and $\kappa^{(r)}(f)$, where $\kappa^{(r)}(f)$ measures how close $f$ is to violating almost $r$-injectivity:
\begin{equation*}
\kappa^{(r)}(f) := \inf_{(x_{1}, \ldots , x_{r}) \in \Conf_r^{\Delta}(\Delta_{N})} \frac{1}{r} \sqrt{\sum_{i = 1}^{r} \sum_{j = 1}^{r} \|f(x_{i}) - f(x_{j})\|^{2}}.
\end{equation*}
Specifically, we will show that $\sqrt{2} \cdot \sin(\alpha^{(r)}(f) / 2) \cdot \kappa^{(r)}(f) \le \delta(f)$.
Then lower bounds on $\alpha^{(r)}(f)$, e.g.\ Theorem~\ref{thm:quant-vk-f} and Theorem~\ref{theorem: discontinuous tverberg}, imply lower bounds on $\delta(f) / \kappa^{(r)}(f)$, which have nice concrete interpretations; for example, in the case of Tverberg, we learn that for suitable dimensions $d$, any almost $r$-injective function $f$ is either highly discontinuous or nearly violates $r$-injectivity.

We will need four lemmas.
First we consider the effect of normalization on distances in $\R^{n}$:

\begin{lemma}
\label{lem:kappa0}
Let $x, y \in \R^{n} \setminus \{0\}$.
Then
\begin{equation*}
\min(\|x\|, \|y\|) \cdot \left\|\frac{x}{\|x\|} - \frac{y}{\|y\|}\right\| \le \|x - y\|.
\end{equation*}
\end{lemma}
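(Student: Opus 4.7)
The plan is to reduce the inequality to a one-dimensional statement by homogeneity and then conclude via Cauchy--Schwarz. As a first step, I would assume without loss of generality that $\|x\| \le \|y\|$, so that $\min(\|x\|, \|y\|) = \|x\|$, and set $t = \|x\|/\|y\| \in (0,1]$. Multiplying $\|x\| \cdot \bigl\|\tfrac{x}{\|x\|} - \tfrac{y}{\|y\|}\bigr\|$ through and using $\|x\| = t\|y\|$, the left-hand side equals $\|x - ty\|$, and the task reduces to showing $\|x - ty\| \le \|x - y\|$. Geometrically, $ty$ is the radial rescaling of $y$ onto the sphere of radius $\|x\|$ about the origin, and the claim says this rescaled version of $y$ is at least as close to $x$ as $y$ itself.

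Next I would verify this reduced inequality by a direct expansion. Subtracting $\|x - y\|^2$ from $\|x - ty\|^2$ and grouping terms yields a factorization of the shape $(1 - t)\bigl(2\langle x, y\rangle - (1+t)\|y\|^2\bigr)$. Since $1 - t \ge 0$, the whole expression is $\le 0$ as soon as $2\langle x, y\rangle \le (1+t)\|y\|^2$, and this in turn follows from Cauchy--Schwarz (which gives $\langle x, y\rangle \le \|x\|\|y\| = t\|y\|^2$) together with the elementary bound $2t \le 1 + t$.

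I do not anticipate a real obstacle: the inequality is essentially a one-line consequence of Cauchy--Schwarz, provided one first normalizes by the smaller norm. The only delicate point is keeping track of the sign of $1 - t$ after factoring, which is precisely why the WLOG reduction to $\|x\| \le \|y\|$ is made at the outset; without that ordering, the inequality in the second factor would need to be reversed, and one would have to argue from the opposite direction. An equivalent way to organize the proof is to note that $\phi(s) \coloneqq \|x - sy\|^2$ is a quadratic in $s$ whose minimizer $s^* = \langle x, y\rangle/\|y\|^2$ satisfies $|s^*| \le t$ by Cauchy--Schwarz, so $\phi$ is nondecreasing on $[t, \infty)$ and in particular $\phi(t) \le \phi(1)$; I would include this as a remark but carry out the factorization argument as the main proof.
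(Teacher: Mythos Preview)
Your proof is correct. The reduction to showing $\|x - ty\| \le \|x - y\|$ with $t = \|x\|/\|y\| \in (0,1]$ is clean, the factorization
\[
\|x - ty\|^2 - \|x - y\|^2 = (1-t)\bigl(2\langle x, y\rangle - (1+t)\|y\|^2\bigr)
\]
is accurate, and the closing step $2\langle x,y\rangle \le 2t\|y\|^2 \le (1+t)\|y\|^2$ via Cauchy--Schwarz is valid.

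The paper takes a slightly different route: after the same WLOG $\|x\|\le\|y\|$ it rescales to $\|x\|=1$, uses an orthogonal change of basis to reduce to $n=2$, identifies $\R^2$ with $\C$ so that $x=1$ and $y=rz$ with $r\ge 1$, $|z|=1$, and then shows $|1-z|\le|1-rz|$ by checking that $r\mapsto |1-rz|^2$ has nonnegative derivative for $r\ge 1$. That derivative argument is exactly your alternative ``quadratic $\phi(s)=\|x-sy\|^2$'' remark, carried out in complex coordinates after a dimension reduction. Your main argument is more self-contained: it stays in $\R^n$, avoids the reduction to two dimensions and the passage to $\C$, and replaces the calculus step by an explicit factorization plus Cauchy--Schwarz. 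The paper's version makes the geometry (moving along the ray through $y$) slightly more visible; yours is shorter and uses less machinery.
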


\begin{proof}
Assume without loss of generality that $\|x\| \le \|y\|$.
Since the desired inequality is preserved under replacing $x, y$ with $cx, cy$ for $c \ne 0$, we may assume $\|x\| = 1$.
By an orthogonal change of basis we may assume $x = (1, 0, 0, \ldots , 0)$, $y = (y_{1}, y_{2}, 0, \ldots , 0)$ with $\|y\| \ge 1$.
Then we may assume $n = 2$, and moreover we may replace $\R^{2}$ with $\C$, so that $x = 1$ and $y = rz$, where $r \ge 1$ and $z \in \C$ with $\|z\| = 1$.
Hence it suffices to prove $|1 - z| \le |1 - rz|$ for $r \ge 1$ and $z \in \C$.

Now consider the function $f \colon \R \to \C$ defined by $f(r) = |1 - rz|^{2}$, which takes values in $\R$.
We have $f(r) = (1 - rz)(1 - r\bar{z})$, so $f'(r) = 2r - z - \bar{z}$.
Since $\|z\| = \|\bar{z}\| = 1$, we have $f'(r) \ge 0$ for $r \ge 1$.
Therefore, $f(r) \ge f(1)$ for all $r \ge 1$, which gives the desired inequality.
\end{proof}

Next we consider the effect of normalization on the modulus of discontinuity $\delta$:

\begin{lemma}
\label{lem:kappa1}
Let $V$ be a real inner product space, and define $\nu : V \setminus \{0\} \to S(V)$ by $\nu(x) = x / \|x\|$.
Let $g \colon X \to V \setminus \{0\}$ be a function, and define $\kappa(g) = \inf_{x \in X}\|g(x)\|$.
Then
\begin{equation*}
2 \sin(\delta(\nu \circ g) / 2) \cdot \kappa(g) \le \delta(g).
\end{equation*}
\end{lemma}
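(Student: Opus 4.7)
The plan is to combine Lemma \ref{lem:kappa0}, which controls the Euclidean distance between normalized vectors, with the elementary identity $\|u-v\| = 2\sin(\theta/2)$ for unit vectors $u,v\in S(V)$ separated by angular distance $\theta$, and then to unwind the definition of the modulus of discontinuity.

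First, I would apply Lemma~\ref{lem:kappa0} pointwise. For any $x_1, x_2 \in X$,
\[
\min(\|g(x_1)\|, \|g(x_2)\|) \cdot \|\nu(g(x_1)) - \nu(g(x_2))\| \le \|g(x_1) - g(x_2)\|.
\]
Since $\kappa(g) \le \min(\|g(x_1)\|, \|g(x_2)\|)$ by definition, and since every pair of unit vectors $u,v$ satisfies $\|u-v\| = 2\sin(d_{\mathrm{geo}}(u,v)/2)$ (from $\|u-v\|^{2} = 2 - 2\cos d_{\mathrm{geo}}(u,v)$), this rewrites as
\[
2\kappa(g)\,\sin\!\left(\tfrac{1}{2}\,d_{\mathrm{geo}}\bigl(\nu(g(x_1)),\,\nu(g(x_2))\bigr)\right) \le \|g(x_1) - g(x_2)\|.
\]

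Next, I would unwind the definition of $\delta$. For every $\delta > \delta(g)$, each $x\in X$ admits a neighborhood $U_x$ with $\diam(g(U_x)) \le \delta$ in the Euclidean metric. Taking the supremum of the displayed bound over pairs in $U_x$ gives a neighborhood estimate $\diam((\nu\circ g)(U_x)) \le 2\arcsin(\delta/(2\kappa(g)))$ in the geodesic metric on $S(V)$, valid whenever $\delta \le 2\kappa(g)$. This shows $\delta(\nu\circ g) \le 2\arcsin(\delta/(2\kappa(g)))$, and rearranging, using monotonicity of $\sin$ on $[0,\pi/2]$ together with $\delta(\nu\circ g) \le \diam(S(V)) = \pi$, yields $2\kappa(g)\sin(\delta(\nu\circ g)/2) \le \delta$. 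Letting $\delta \searrow \delta(g)$ gives the claim.

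The only delicate point is keeping the two metrics on $S(V)$ in order: Lemma~\ref{lem:kappa0} is naturally phrased in the Euclidean chord metric, whereas $\delta(\nu\circ g)$ is measured in the geodesic metric. A minor bookkeeping matter is the corner case $\delta(g) > 2\kappa(g)$, where $\arcsin$ would be evaluated outside its domain; there the inequality is immediate from $\sin(\cdot) \le 1$, which forces $2\kappa(g)\sin(\delta(\nu\circ g)/2) \le 2\kappa(g) < \delta(g)$. I do not foresee any other serious obstacle.
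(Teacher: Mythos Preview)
Your proof is correct and follows essentially the same approach as the paper: apply Lemma~\ref{lem:kappa0} to pairs $g(x_1),g(x_2)$, use $\kappa(g)\le\min(\|g(x_1)\|,\|g(x_2)\|)$, and convert chord distances on $S(V)$ to geodesic distances via $\|u-v\|=2\sin(d_{\mathrm{geo}}(u,v)/2)$. The paper's proof is terser and leaves the unwinding of $\delta$ implicit, while you spell out the neighborhood argument and the corner case; one tiny remark is that your corner case should read $\delta(g)\ge 2\kappa(g)$ rather than strict inequality, but the bound $2\kappa(g)\sin(\cdot)\le 2\kappa(g)\le\delta(g)$ covers that as well.
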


\begin{proof}
Assume $V = \mathbb{R}^{n}$ by a change of basis, and then the result follows by applying Lemma~\ref{lem:kappa0} to pairs $g(x), g(y)$ with $x, y \in X$, using $\min(\|g(x)\|, \|g(y)\|) \ge \kappa(g)$.
Note that $\|\frac{g(x)}{\|g(x)\|} - \frac{g(y)}{\|g(y)\|}\|$ measures the Euclidean distance between $(\nu \circ g)(x), (\nu \circ g)(y)$, but $\delta(\nu \circ g)$ uses the geodesic distance in $S(V)$, so geodesic distances $d$ must be converted to Euclidean distances $2\sin(d / 2)$.
\end{proof}

Since $\Conf_{r}^{\Delta}(f)$ has $r$ components each defined using $f$, we can prove:

\begin{lemma}
\label{lem:kappa2}
For any function $f \colon \Delta_{N} \to \mathbb{R}^{d}$, we have $\delta(\Conf_{r}^{\Delta}(f)) \le \sqrt{r} \cdot \delta(f)$.
\end{lemma}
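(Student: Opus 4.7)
The plan is to unwind the definition of $\Conf_r^{\Delta}(f)$, observe that centering reduces variance, and then assemble neighborhoods in the deleted product from neighborhoods provided by $\delta(f)$.

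\smallskip

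Concretely, equip $W_r^{\oplus d} \subseteq (\R^d)^{\oplus r}$ with the Euclidean norm. For $y = (y_1,\dots,y_r),\ z = (z_1,\dots,z_r) \in \Conf_r^{\Delta}(\Delta_N)$, set $u_i := f(y_i) - f(z_i)$ and $\bar u := \tfrac{1}{r}\sum_i u_i$. A direct computation from the definition gives
\[
    \bigl\|\Conf_r^{\Delta}(f)(y) - \Conf_r^{\Delta}(f)(z)\bigr\|^2 \;=\; \sum_{i=1}^r \|u_i - \bar u\|^2 \;=\; \sum_{i=1}^r \|u_i\|^2 \,-\, r\|\bar u\|^2 \;\le\; \sum_{i=1}^r \|u_i\|^2,
\]
which is the standard variance identity for vectors; this is the only nontrivial algebraic step and it is what produces the factor $\sqrt{r}$.

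\smallskip

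To invoke $\delta(f)$, fix $x = (x_1,\dots,x_r) \in \Conf_r^{\Delta}(\Delta_N)$ and $\varepsilon > 0$. Using the definition of the modulus of discontinuity from Section~\ref{sec:background}, for each $i$ I pick an open neighborhood $U_{x_i}$ of $x_i$ in $\Delta_N$ with $\diam(f(U_{x_i})) \le \delta(f) + \varepsilon$. Since the $x_i$ lie in pairwise disjoint faces of $\Delta_N$ and the set of $r$-tuples in pairwise disjoint faces is open in $(\Delta_N)^{\times r}$, I can shrink the $U_{x_i}$ so that $V := (U_{x_1}\times\cdots\times U_{x_r}) \cap \Conf_r^{\Delta}(\Delta_N)$ is still an open neighborhood of $x$ and every $r$-tuple in $U_{x_1}\times\cdots\times U_{x_r}$ already lies in $\Conf_r^{\Delta}(\Delta_N)$.

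\smallskip

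Then for any $y,z \in V$ we have $\|u_i\| = \|f(y_i) - f(z_i)\| \le \delta(f) + \varepsilon$ for each $i$, so the variance bound above gives
\[
    \bigl\|\Conf_r^{\Delta}(f)(y) - \Conf_r^{\Delta}(f)(z)\bigr\|^2 \;\le\; r\bigl(\delta(f) + \varepsilon\bigr)^2.
\]
Hence $\diam\bigl(\Conf_r^{\Delta}(f)(V)\bigr) \le \sqrt{r}(\delta(f)+\varepsilon)$, and as $x$ and $\varepsilon > 0$ were arbitrary this yields $\delta(\Conf_r^{\Delta}(f)) \le \sqrt{r}\,\delta(f)$. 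The one subtlety worth flagging is the topological step of choosing $V$ inside the deleted product; everything else is a one-line variance estimate.
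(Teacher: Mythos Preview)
Your argument is essentially the same as the paper's: the paper factors $\Conf_r^{\Delta}(f)$ as $f^r$ followed by the orthogonal projection onto $W_r^{\oplus d}$, notes that projection is $1$-Lipschitz (which is exactly your variance identity), and then bounds $\delta(f^r)$ on the full product $\Delta_N^{\times r}$ by $\sqrt{r}\,\delta(f)$ using product neighborhoods.

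One correction: your claim that ``the set of $r$-tuples in pairwise disjoint faces is open in $(\Delta_N)^{\times r}$'' is false. For instance, with $r=2$ the pair of vertices $(v_0,v_1)$ lies in $\Conf_2^{\Delta}(\Delta_N)$, but arbitrarily close pairs with $x_1$ in the open edge $v_0v_2$ and $x_2$ in the open edge $v_1v_2$ do not, since those edges share~$v_2$. Fortunately this claim is not needed: you already define $V = (U_{x_1}\times\cdots\times U_{x_r})\cap \Conf_r^{\Delta}(\Delta_N)$, which is open in $\Conf_r^{\Delta}(\Delta_N)$ by the subspace topology, and every $y,z\in V$ still has $y_i,z_i\in U_{x_i}$, so your variance bound goes through unchanged. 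Simply delete the shrinking step and the openness assertion, and the proof is complete.
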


\begin{proof}
Let $f^{r} \colon \Delta_{N}^{\times r} \to (\mathbb{R}^{d})^{\oplus r}$ be defined by $f^{r}(x_{1}, \ldots , x_{r}) = (f(x_{1}), \ldots , f(x_{r}))$.
Since $\Conf_{r}^{\Delta}(f)$ is defined on a smaller domain by applying $f^{r}$ and then an orthogonal projection to $W_{r}^{\oplus d}$ which does not increase distances, we have $\delta(\Conf_{r}^{\Delta}(f)) \le \delta(f^{r})$, so it suffices to prove $\delta(f^{r}) \le \sqrt{r} \cdot \delta(f)$.

To prove this, let $\varepsilon > 0$; then for all $x \in \Delta_{N}$ there exists an open neighborhood $U_{x} \ni x$ such that $\diam(f(U_{x})) \le \delta(f) + \varepsilon$.
Now consider $(x_{1}, \ldots , x_{r}) \in \Delta_{N}^{\times r}$, and note that $\prod_{i} U_{x_{i}} \ni (x_{1}, \ldots , x_{r})$ is open in $\Delta_{N}^{\times r}$.
Then 
\[
    \diam\Big(f^{r}\big(\prod_{i} U_{x_{i}}\big)\Big)^{2} \le \sum_{i} \diam(f(U_{x_{i}}))^{2} \le r(\delta(f) + \varepsilon)^{2}.
\]
Taking $\varepsilon \to 0$ gives the result.
\end{proof}

Finally, we can relate $\kappa$ (as defined in Lemma~\ref{lem:kappa1}) to $\kappa^{(r)}$:

\begin{lemma}
\label{lem:kappa3}
For any function $f \colon \Delta_{N} \to \mathbb{R}^{d}$, we have $\sqrt{2} \cdot \kappa(\Conf_{r}^{\Delta}(f)) = \sqrt{r} \cdot \kappa^{(r)}(f)$.
\end{lemma}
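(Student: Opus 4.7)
The plan is to unpack both sides of the claimed equality, recognize that $\|\Conf_{r}^{\Delta}(f)(x_1,\ldots,x_r)\|^2$ can be expressed as a sum of squared pairwise distances $\|f(x_i)-f(x_j)\|^2$ via a standard ``variance'' identity, and then observe that the two infima are identical up to the constants $\sqrt{r}$ and $\sqrt{2}$.

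First, I would write out the definition explicitly. Setting $\bar{f} := \tfrac{1}{r}\sum_{i=1}^{r} f(x_i)$, the $i$-th component of $\Conf_{r}^{\Delta}(f)(x_1,\ldots,x_r)$ is $f(x_i)-\bar{f}$, so the Euclidean norm in $W_r^{\oplus d}\subseteq (\mathbb{R}^d)^{\oplus r}$ satisfies
\begin{equation*}
\bigl\|\Conf_{r}^{\Delta}(f)(x_1,\ldots,x_r)\bigr\|^{2} \;=\; \sum_{i=1}^{r} \bigl\|f(x_i)-\bar{f}\bigr\|^{2}.
\end{equation*}

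The key step is to invoke the classical identity that for any vectors $v_{1},\ldots,v_{r}$ in a real inner product space with mean $\bar{v}$,
\begin{equation*}
\sum_{i=1}^{r} \|v_{i}-\bar{v}\|^{2} \;=\; \frac{1}{2r} \sum_{i=1}^{r}\sum_{j=1}^{r} \|v_{i}-v_{j}\|^{2}.
\end{equation*}
This follows from a one-line expansion of both sides: the right-hand side equals $\sum_{i}\|v_i\|^2 - \tfrac{1}{r}\|\sum_i v_i\|^2$, which coincides with the left-hand side. Applying this identity to $v_i = f(x_i)$ yields
\begin{equation*}
\bigl\|\Conf_{r}^{\Delta}(f)(x_1,\ldots,x_r)\bigr\|^{2} \;=\; \frac{1}{2r}\sum_{i=1}^{r}\sum_{j=1}^{r}\|f(x_i)-f(x_j)\|^{2}.
\end{equation*}

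Finally, taking square roots and then infima over $\Conf_{r}^{\Delta}(\Delta_N)$ on both sides and comparing with the defining formula for $\kappa^{(r)}(f)$ gives
\begin{equation*}
\kappa\bigl(\Conf_{r}^{\Delta}(f)\bigr) \;=\; \frac{1}{\sqrt{2r}} \inf \sqrt{\sum_{i,j}\|f(x_i)-f(x_j)\|^{2}} \;=\; \frac{r}{\sqrt{2r}}\cdot \kappa^{(r)}(f) \;=\; \sqrt{\tfrac{r}{2}}\cdot \kappa^{(r)}(f),
\end{equation*}
which rearranges to the desired equality $\sqrt{2}\cdot \kappa(\Conf_{r}^{\Delta}(f)) = \sqrt{r}\cdot \kappa^{(r)}(f)$. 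There is no real obstacle here: the whole statement reduces to the variance-type identity above together with the observation that $W_r^{\oplus d}$ inherits its norm from $(\mathbb{R}^d)^{\oplus r}$, so pulling back via $\Conf_{r}^{\Delta}(f)$ is harmless.
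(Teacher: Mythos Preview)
Your proof is correct and follows essentially the same route as the paper: both rely on the variance identity $2r\sum_i\|y_i-\bar y\|^2=\sum_{i,j}\|y_i-y_j\|^2$ and then take the infimum over $\Conf_r^{\Delta}(\Delta_N)$. Your remark that $W_r^{\oplus d}$ inherits its norm from $(\R^d)^{\oplus r}$ is the only extra detail, and it is a harmless clarification.
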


\begin{proof}
Consider the projection $\pi \colon (\R^{d})^{\oplus r} \to W_{r}^{\oplus d}$ defined by $(y_{i})_{i=1}^r \mapsto (y_{i} - \frac{1}{r}\sum_{j=1}^ry_{j})_{i=1}^r$.
It is straightforward to verify by inner product expansion that
\[
    2r \sum_{i=1}^r\Big\| y_{i} - \frac{1}{r} \sum_{j=1}^r y_{j}\Big\|^{2} = \sum_{i, j=1}^r \|y_{i} - y_{j}\|^{2}.
\]
Then take $y_{i} = f(x_{i})$, and take the infimum over $(x_{1}, \ldots , x_{r}) \in \Conf_{r}^{\Delta}(\Delta_{N})$, which gives the identity $2r \cdot \kappa(\Conf_{r}^{\Delta}(f))^{2} = r^{2} \cdot \kappa^{(r)}(f)^{2}$, and the result follows.
\end{proof}

Now we are ready to relate $\alpha^{(r)}(f)$ to the quantities $\kappa^{(r)}(f)$ and $\delta(f)$:

\begin{theorem}
\label{thm:kappa}
For any $r$-injective function $f \colon \Delta_{N} \to \mathbb{R}^{d}$, we have
\[
    \delta(f) \ge \sqrt{2} \cdot \sin(\alpha^{(r)}(f) / 2) \cdot \kappa^{(r)}(f).
\]
\end{theorem}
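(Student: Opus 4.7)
The plan is to chain the three lemmas that the authors have just proved, with the middle lemma applied to $g = \Conf_r^\Delta(f)$, viewed as a function into $W_r^{\oplus d} \setminus \{0\}$ (this codomain is fine, since $f$ is assumed almost $r$-injective). The whole proof is then a one-line rearrangement, so the main task is just to arrange the scaling factors correctly.

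First I would apply Lemma~\ref{lem:kappa1} with $V = W_r^{\oplus d}$ and $g = \Conf_r^\Delta(f)$. By Definition~\ref{def:alpha^(r)}, we have $\alpha^{(r)}(f) = \delta(\nu \circ \Conf_r^\Delta(f))$, so Lemma~\ref{lem:kappa1} yields
\[
    2\sin\bigl(\alpha^{(r)}(f)/2\bigr) \cdot \kappa\bigl(\Conf_r^\Delta(f)\bigr) \;\le\; \delta\bigl(\Conf_r^\Delta(f)\bigr).
\]
Next I would use Lemma~\ref{lem:kappa3} to rewrite the left-hand factor $\kappa(\Conf_r^\Delta(f)) = \frac{\sqrt{r}}{\sqrt{2}}\, \kappa^{(r)}(f)$, and Lemma~\ref{lem:kappa2} to bound the right-hand side by $\sqrt{r}\cdot \delta(f)$. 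Substituting and cancelling the common factor $\sqrt{r}$ (which is positive) gives exactly
\[
    \sqrt{2}\cdot \sin\bigl(\alpha^{(r)}(f)/2\bigr)\cdot \kappa^{(r)}(f) \;\le\; \delta(f),
\]
which is the claim.

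There is no real obstacle: the three preparatory lemmas were designed precisely so that the exponents and constants line up. The only thing to be slightly careful about is confirming that Lemma~\ref{lem:kappa1} applies, i.e.\ that $\Conf_r^\Delta(f)$ really does take values in $W_r^{\oplus d}\setminus\{0\}$ (so that $\kappa(\Conf_r^\Delta(f))$ and the composition with $\nu$ are both defined); this is exactly the hypothesis that $f$ is almost $r$-injective, as noted just before Definition~\ref{def:alpha^(r)}.
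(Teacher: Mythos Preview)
Your proof is correct and follows essentially the same approach as the paper: apply Lemma~\ref{lem:kappa1} to $g=\Conf_r^\Delta(f)$ to obtain $\delta(\Conf_r^\Delta(f)) \ge 2\sin(\alpha^{(r)}(f)/2)\cdot\kappa(\Conf_r^\Delta(f))$, and then invoke Lemmas~\ref{lem:kappa2} and~\ref{lem:kappa3} to convert both sides and cancel the factor~$\sqrt{r}$. Your remark that almost $r$-injectivity is precisely what guarantees $\Conf_r^\Delta(f)$ avoids the origin is also spot on.
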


\begin{proof}
By the definition of $\alpha^{(r)}$, we have $\alpha^{(r)}(f) = \delta(\nu \circ \Conf_{r}^{\Delta}(f))$, so by Lemma~\ref{lem:kappa1}, we have 
\[
    \delta(\Conf_{r}^{\Delta}(f)) \ge 2\sin(\alpha^{(r)}(f) / 2) \cdot \kappa(\Conf_{r}^{\Delta}(f)).
\]
The result follows from Lemmas~\ref{lem:kappa2} and~\ref{lem:kappa3}.
\end{proof}

For example, we obtain the following more concrete quantitative Tverberg result saying that for suitable dimensions, any almost $r$-injective function $f$ is either highly discontinuous or nearly violates $r$-injectivity:

\begin{corollary}
Let $d \ge 1$ be an integer, $r$ a prime power, and $f \colon \Delta_{(r - 1)(d + 1)} \to \R^{d}$ any function.
Then $f$ must satisfy $\delta(f) \ge \kappa^{(r)}(f) \cdot \sqrt{1 + \frac{1}{d(r - 1)}}$.
\end{corollary}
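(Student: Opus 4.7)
The corollary is essentially a direct combination of the two main results immediately preceding it, so the plan is largely bookkeeping: combine the quantified topological Tverberg bound with the $\alpha^{(r)}$-to-$\delta$ comparison, and simplify the resulting trigonometric expression via a half-angle identity.

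First I would split on whether $f$ is almost $r$-injective. If it is not, then by definition there exist tuples $(x_1,\dots,x_r)\in\Conf_r^{\Delta}(\Delta_{(r-1)(d+1)})$ with $f(x_1)=\cdots=f(x_r)$, or tuples making $\sum_{i,j}\|f(x_i)-f(x_j)\|^2$ arbitrarily small, so $\kappa^{(r)}(f)=0$ and the claimed inequality $\delta(f)\ge 0$ is automatic. Therefore I may assume $f$ is almost $r$-injective, which allows me to invoke both Theorem~\ref{theorem: discontinuous tverberg} and Theorem~\ref{thm:kappa}.

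Next, Theorem~\ref{theorem: discontinuous tverberg} gives $\alpha^{(r)}(f)\ge \arccos\!\left(-\tfrac{1}{d(r-1)}\right)$, while Theorem~\ref{thm:kappa} gives $\delta(f)\ge \sqrt{2}\,\sin(\alpha^{(r)}(f)/2)\cdot \kappa^{(r)}(f)$. Since $\sin(\cdot/2)$ is monotone increasing on $[0,\pi]$, the first bound plugs into the second to yield
\[
\delta(f)\ \ge\ \sqrt{2}\,\sin\!\left(\tfrac{1}{2}\arccos\!\left(-\tfrac{1}{d(r-1)}\right)\right)\cdot \kappa^{(r)}(f).
\]
It then remains to simplify the numerical factor. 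Using the half-angle identity $\sin(\theta/2)=\sqrt{(1-\cos\theta)/2}$ with $\cos\theta=-\tfrac{1}{d(r-1)}$ gives
\[
\sqrt{2}\,\sin(\theta/2)\ =\ \sqrt{2}\cdot\sqrt{\tfrac{1}{2}\!\left(1+\tfrac{1}{d(r-1)}\right)}\ =\ \sqrt{1+\tfrac{1}{d(r-1)}},
\]
which is exactly the constant appearing in the corollary.

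There is no real obstacle here; the only mild care needed is the trivial case handling for $\kappa^{(r)}(f)=0$ and the standard trigonometric identity for $\sin(\arccos(\cdot)/2)$. Everything else is a one-line substitution once Theorems~\ref{theorem: discontinuous tverberg} and~\ref{thm:kappa} are in hand.
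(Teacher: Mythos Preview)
Your proof is correct and follows essentially the same approach as the paper: handle the non--almost-$r$-injective case trivially via $\kappa^{(r)}(f)=0$, then combine Theorems~\ref{theorem: discontinuous tverberg} and~\ref{thm:kappa} and simplify with the half-angle formula. The only (harmless) extra detail you add is the explicit appeal to monotonicity of $\sin(\cdot/2)$ on $[0,\pi]$.
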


\begin{proof}
If $f$ is not almost $r$-injective, then $\kappa^{(r)}(f) = 0$, and the inequality holds trivially.
Otherwise, by Theorems~\ref{thm:kappa} and~\ref{theorem: discontinuous tverberg}, we have
\[
    \delta(f) \ge \sqrt{2} \cdot \kappa^{(r)}(f) \cdot \sin\big(\arccos(\tfrac{-1}{d(r - 1)}) / 2\big).
\]
Simplifying using the half-angle formula for sine gives the result.
\end{proof}

\bibliographystyle{plain}
\bibliography{GH-BU-VR.bib}

\end{document}